 \newtheorem{thm}{Theorem}[section]
 \newtheorem{exm}[thm]{Example}
 \newtheorem{lemma}[thm]{Lemma}
 \theoremstyle{definition}
 \newtheorem{definition}[thm]{Definition}
 \newtheorem{rem}[thm]{Remark}
\begin{document}
\begin{center}
\title[]{}
\textbf{OSCILLATION CRITERIA FOR HIGHER ORDER NONLINEAR GENERALIZED NEUTRAL DIFFERENCE EQUATIONS} \\

\textbf{  Adem K{\i}l{\i}\c{c}man$^{1}$, P. Venkata Mohan Reddy$^{2}$, M. Maria Susai Manuel$^{3}$. }\\

$^{1}$Department of Mathematics and Institute for Mathematical Research,\\ University Putra Malaysia, 43400 UPM, Serdang, Selangor, Malaysia \\

$^{2,3}$ Department of Science and Humanities, R. M. D. Engineering College, Kavaraipettai - 601 206, Tamil Nadu, S. India.\\
 
 \markboth{OSCILLATION CRITERIA FOR HIGHER ORDER NONLINEAR GENERALIZED NEUTRAL  DIFFERENCE EQUATIONS}{Adem K{\i}l{\i}\c{c}man, P.Venkata Mohan Reddy, M.Maria Susai Manuel. }
\end{center}
\textbf{Abstract}\\
\indent\qquad In the present study we highlight some results related to the oscillation for high order nonlinear generalized neutral difference equation in the following form   
\begin{equation*}
	\Delta_{\ell}\left(a(k)\Delta_{\ell}^{m-1}z(k)\right)+q(k)f(x({k-\rho\ell}))=0, \label{1e}
\end{equation*}
 where $z(k)=x(k)+p(k)x({\tau(k)})$.\\
 \textbf{Keywords and phrases:} Oscillation, generalized difference equation, higher order, nonlinear, generalized neutral difference equation.  \\  
 \textbf{AMS   Subject Classification:}\quad  39A10, 39A11.
\section{Introduction}
\indent \quad The difference equations are defined based on the operator $\Delta$ given in the form of
$\displaystyle{\Delta u(k) = u(k+1)-u(k)}$ for $ \ k\in \mathbb{N}=\{0,1,2,3,\cdots\}$. Many researchers made significant contribution  in the related studies, see (\cite{agar00}, \cite{ronal90}-\cite{wal91}). However, in the development of $\Delta$ 
\begin{equation}{\label{c02}}
\Delta u(k) = u(k+\ell)-u(k), \ k \in \mathbb{R}, \ \ell \in \mathbb{N}(1),
\end{equation}
we can not see too much progress. \\

\indent\qquad Recently many researchers studying the oscillatory and asymptotic behavior of solutions of higher order neutral type involving $\Delta$, as these equations naturally arise in several applications including in population dynamics. Here, we generalize the results obtained earlier via the generalized difference operator $\Delta_{\ell}$ for the generalized neutral difference equations involving the operator $\Delta_{\ell}$. Many authors studied to find sufficient conditions that ensure all solutions including the bounded solutions of neutral type are oscillatory, see (\cite{agar01}, \cite{ruba01}, \cite{thanda01}, \cite{yildiaz01} - \cite{zhou01}). But none have attempted to generalize these results using the operator $\Delta_{\ell}$ for the neutral type. \\

 In this study, we consider 
  \begin{equation}
 \Delta_{\ell}\left(a(k)\Delta_{\ell}^{m-1}z(k)\right)+q(k)f(x({k-\rho\ell}))=0, \ k\geq k_0 \label{2e}
 \end{equation} and we have identified sufficient conditions for the solutions to be oscilatory so that either of the following true
\begin{equation*}
 \sum_{r=0}^{\infty}\frac{1}{a(k_0+j+r\ell)}=\infty ~~\text{or}~~ \sum_{r=0}^{\infty}\frac{1}{a(k_0+j+r\ell)}<\infty.
\end{equation*}

\section{Some Preliminary Requisites}
\indent\qquad In this section, we recall some definitions, lemmas and theorems that will be useful during the development of the study. 
\begin{definition}\cite{Msm.gb06}
 Let $u(k)$, $k\in[0,\infty)$ be a real or complex valued function and $\ell\in(0,\infty)$. Then, the operator  $\Delta_{\ell}$ is defined as
\begin{equation}\label{rep1}
 \Delta_{\ell} u(k) \  = \  u(k+\ell)-u(k).
\end{equation}
and is called generalized difference operator and $r^{th}$ order is given by
\begin{equation}\label{rep2}
\Delta_{\ell}^ru(k)=\underbrace{\Delta_\ell(\Delta_\ell(\dots(\Delta_\ell u(k))))}_{r\ times}.
\end{equation}
\end{definition}
 \begin{definition}\cite{Msm.gb06}\label{inv}
Let $u(k)$ be a real or complex valued function and $\ell\in(0,\infty)$. Then, the inverse of operator $\Delta_\ell$ denoted by $\Delta_\ell^{-1}$ is defined as
\begin{equation}\label{rep5}
  \text{If  }\Delta_{\ell}v(k)\  =\  u(k), \ \text{ then }  v(k)\  =\  \Delta_{\ell}^{-1}u(k)+c_j ,
\end{equation}
where $c_j$ is a constant for all $k\in\mathbb{N}_\ell(j),j=k-\left[\frac{k}{\ell}\right]\ell$.
\end{definition}
\begin{lemma}\cite{Msm.gb06}\label{sumo}
If a real valued function $u(k)$ is defined for $k\in[k_0,\infty)$, then 
\begin{equation}\label{2}
 \Delta_\ell^{-1}u(k)=\sum\limits_{r=1}^{\left[\frac{k-k_0}{\ell}\right]}u(k-r\ell)+c_j, 
\end{equation} where $c_j$ is a constant for $k\in\mathbb{N}_\ell(j)$, $j=k-k_0-\Big[\frac{k-k_0}{\ell}\Big]\ell$. 
\end{lemma}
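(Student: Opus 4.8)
The plan is to verify directly that the right-hand side of \eqref{2} is a primitive for $\Delta_\ell$ in the sense of Definition \ref{inv}, and then to observe that it is the most general one. Write $N(k)=\left[\frac{k-k_0}{\ell}\right]$ and $v(k)=\sum_{r=1}^{N(k)}u(k-r\ell)+c_j$, where $j=k-k_0-N(k)\ell$. First I would record two elementary facts about the greatest-integer function: (i) since adding an integer commutes with $[\,\cdot\,]$, $N(k+\ell)=\left[\frac{(k-k_0)+\ell}{\ell}\right]=N(k)+1$; and (ii) the associated residue is unchanged, $(k+\ell)-k_0-N(k+\ell)\ell=k-k_0-N(k)\ell=j$, so $k$ and $k+\ell$ lie in the same class $\mathbb{N}_\ell(j)$ and hence carry the \emph{same} constant $c_j$. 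I would also note that each term of the sum is legitimate: for $1\le r\le N(k)$ one has $k-r\ell\ge k-N(k)\ell\ge k_0$ because $N(k)\ell\le k-k_0$, so $u(k-r\ell)$ is defined.

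Next I would compute $\Delta_\ell v(k)=v(k+\ell)-v(k)$. Using (i) and (ii),
\[
v(k+\ell)=\sum_{r=1}^{N(k)+1}u\big((k+\ell)-r\ell\big)+c_j=\sum_{r=1}^{N(k)+1}u\big(k-(r-1)\ell\big)+c_j .
\]
Shifting the index by $s=r-1$ gives $v(k+\ell)=\sum_{s=0}^{N(k)}u(k-s\ell)+c_j=u(k)+\Big(\sum_{s=1}^{N(k)}u(k-s\ell)+c_j\Big)=u(k)+v(k)$, whence $\Delta_\ell v(k)=u(k)$. The end-of-domain case $N(k)=0$ (that is, $k_0\le k<k_0+\ell$) is handled by the same computation with an empty sum: there $v(k)=c_j$, $N(k+\ell)=1$, and $v(k+\ell)=u(k)+c_j$, so again $\Delta_\ell v(k)=u(k)$. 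Thus the right-hand side of \eqref{2} satisfies $\Delta_\ell v(k)=u(k)$.

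To close the argument in the form stated in Definition \ref{inv}, I would show this is the \emph{general} solution. If $v_1,v_2$ both satisfy $\Delta_\ell v_i(k)=u(k)$ on $[k_0,\infty)$, then $\Delta_\ell(v_1-v_2)(k)=0$, i.e. $(v_1-v_2)(k+\ell)=(v_1-v_2)(k)$; iterating along the progression $k_0+j,\,k_0+j+\ell,\,k_0+j+2\ell,\dots$ shows that $v_1-v_2$ is constant on each $\mathbb{N}_\ell(j)$, which is exactly the freedom allowed in Definition \ref{inv}. Combining this with the preceding paragraph gives \eqref{2}. (Equivalently, one can run an induction on $N(k)$, the base case $N(k)=0$ being the definition and the inductive step using $v(k)=v(k-\ell)+u(k-\ell)$ together with the same index shift.) I do not anticipate any genuine difficulty; the only points that deserve care are the two greatest-integer identities (i) and (ii) — in particular, checking that the translation $k\mapsto k+\ell$ does not alter the class index $j$, so that the constant of summation really is the same on both sides of $\Delta_\ell$ — and the harmless bookkeeping of the empty sum at the left endpoint of the domain.
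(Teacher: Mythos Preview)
Your argument is correct: the direct verification that $\Delta_\ell v(k)=u(k)$ via the two greatest-integer identities (i) and (ii), together with the uniqueness-up-to-$c_j$ observation, establishes \eqref{2} cleanly. Note, however, that the paper does not supply its own proof of this lemma at all --- it is quoted from \cite{Msm.gb06} without argument, and is then used (together with Definition~\ref{inv}) as the input for the one-line proof of Theorem~\ref{sum}. So there is no ``paper's proof'' to compare against here; your write-up simply fills in what the paper leaves to the cited reference, and does so by the natural route.
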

\begin{thm}\label{sum}
If $\Delta_\ell v(k)=u(k)$ for $k\in[k_0,\infty)$ and $j=k-k_0-\Big[\frac{k-k_0}{\ell}\Big]\ell$, then
\[v(k)-v(k_0+j)=\sum\limits_{r=0}^{\left[\frac{k-k_0-j-\ell}{\ell}\right]}u(k_0+j+r\ell).\]
\end{thm}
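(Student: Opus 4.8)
The plan is to reduce the statement to a finite telescoping sum over steps of length $\ell$. First I would set $n=\left[\frac{k-k_0}{\ell}\right]$, so that by the very definition of $j$ we have $k-k_0-j=n\ell$, i.e. $k=k_0+j+n\ell$. Substituting this into the upper index of the sum on the right-hand side gives
\[
\left[\frac{k-k_0-j-\ell}{\ell}\right]=\left[\frac{n\ell-\ell}{\ell}\right]=n-1,
\]
so the identity to be proved becomes simply $v(k)-v(k_0+j)=\sum_{r=0}^{n-1}u(k_0+j+r\ell)$.

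Next I would invoke the hypothesis $\Delta_\ell v(k)=u(k)$, that is $v(k+\ell)-v(k)=u(k)$, evaluated at the points $k_0+j+r\ell$ for $r=0,1,\dots,n-1$ (all of which lie in $[k_0,\infty)$ since $j\geq 0$). This yields $u(k_0+j+r\ell)=v\bigl(k_0+j+(r+1)\ell\bigr)-v(k_0+j+r\ell)$. Summing over $r$ and cancelling the intermediate terms gives
\[
\sum_{r=0}^{n-1}u(k_0+j+r\ell)=v(k_0+j+n\ell)-v(k_0+j)=v(k)-v(k_0+j),
\]
which is exactly the required identity.

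Alternatively, the result can be read off directly from Lemma \ref{sumo}: writing $v(k)=\Delta_\ell^{-1}u(k)+c_j$ and noting that at $k=k_0+j$ one has $\left[\frac{j}{\ell}\right]=0$ because $0\leq j<\ell$, the defining sum is empty and $v(k_0+j)=c_j$; subtracting gives $v(k)-v(k_0+j)=\sum_{r=1}^{n}u(k-r\ell)$, and the substitution $s=n-r$ together with $k-r\ell=k_0+j+(n-r)\ell$ converts this into $\sum_{s=0}^{n-1}u(k_0+j+s\ell)$.

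I do not anticipate a genuine obstacle here; the only point requiring care is the bookkeeping with the greatest-integer function — specifically, verifying that the definition of $j$ forces $k-k_0-j$ to be an exact multiple $n\ell$ of $\ell$ and that the upper limit $\left[\frac{k-k_0-j-\ell}{\ell}\right]$ collapses to $n-1$ — after which the statement is a one-line telescoping argument (or, equivalently, a one-line reindexing of Lemma \ref{sumo}).
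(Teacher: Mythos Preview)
Your proposal is correct, and your alternative argument via Lemma \ref{sumo} with the identification $c_j=v(k_0+j)$ is precisely the paper's own proof (stated there in one line). Your primary telescoping argument is a slightly more self-contained variant that bypasses the inverse operator formalism, but both routes are equivalent and equally short.
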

\begin{proof}
The proof follows from Definition \ref{inv},and Lemma \ref{sumo} and $c_j=v(k_2+j)$.
\end{proof}
\begin{lemma}\cite{mmssubc}\label{lem1.8.4}
Let $u(k)$ and $v(k)$ be defined on 
$[k_0,\infty)$. Then, 
\begin{equation}{\label{1.8.4}}
\Delta_{\ell}\{u(k)v(k)\}\ =\ u(k+\ell)\Delta_{\ell}v(k)\ +v(k)\Delta_{\ell}u(k)\ =u(k)\Delta_{\ell}v(k)+ v(k+\ell)\Delta_{\ell}u(k)
 \end{equation} 
 for all $k\in [k_0,\infty)$ and $j=k-k_0-\left[\frac{k-k_0}{\ell}\right]\ell$.
\end{lemma}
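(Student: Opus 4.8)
The plan is to prove both identities by a direct computation straight from the definition \eqref{rep1} of $\Delta_\ell$, using the standard ``add and subtract a mixed term'' device. By definition, $\Delta_\ell\{u(k)v(k)\} = u(k+\ell)v(k+\ell) - u(k)v(k)$, so everything reduces to regrouping this single expression in two different ways.

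For the first equality I would insert the mixed term $u(k+\ell)v(k)$:
\[
\Delta_\ell\{u(k)v(k)\} = \big(u(k+\ell)v(k+\ell) - u(k+\ell)v(k)\big) + \big(u(k+\ell)v(k) - u(k)v(k)\big),
\]
factor $u(k+\ell)$ out of the first bracket and $v(k)$ out of the second, and recognise the bracketed differences as $\Delta_\ell v(k)$ and $\Delta_\ell u(k)$ respectively. This yields $u(k+\ell)\Delta_\ell v(k) + v(k)\Delta_\ell u(k)$.

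For the second equality I would instead insert the other mixed term $u(k)v(k+\ell)$:
\[
\Delta_\ell\{u(k)v(k)\} = \big(u(k+\ell)v(k+\ell) - u(k)v(k+\ell)\big) + \big(u(k)v(k+\ell) - u(k)v(k)\big),
\]
and then factor $v(k+\ell)$ from the first bracket and $u(k)$ from the second to obtain $v(k+\ell)\Delta_\ell u(k) + u(k)\Delta_\ell v(k)$. The two right-hand expressions agree because each equals $\Delta_\ell\{u(k)v(k)\}$; alternatively, their difference is $\big(u(k+\ell)-u(k)\big)\Delta_\ell v(k) - \big(v(k+\ell)-v(k)\big)\Delta_\ell u(k) = \Delta_\ell u(k)\,\Delta_\ell v(k) - \Delta_\ell v(k)\,\Delta_\ell u(k) = 0$. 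The hypothesis that $u,v$ be defined on $[k_0,\infty)$ is used only to guarantee that $u(k+\ell)$ and $v(k+\ell)$ are defined, so no extra assumptions are needed. There is no genuine obstacle in this lemma; the only point requiring a little care is selecting the two \emph{different} mixed terms to obtain the two \emph{different} factored forms.
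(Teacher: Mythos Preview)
Your proof is correct. The paper itself does not supply a proof of this lemma; it is quoted from \cite{mmssubc}, so there is no in-paper argument to compare against. Your direct computation from the definition $\Delta_\ell\{u(k)v(k)\}=u(k+\ell)v(k+\ell)-u(k)v(k)$, adding and subtracting the mixed terms $u(k+\ell)v(k)$ and $u(k)v(k+\ell)$ respectively, is the standard derivation of the generalized Leibniz rule and is exactly what one would expect the cited reference to contain.
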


\begin{lemma}\label{cor1.8.6}\cite{mmssubc}
Let $u(k)$ be defined on $[0,\infty)$ and $k_0\in[0,\infty)$. Then,  \\
$\displaystyle{\Delta_\ell^m u(k)=\sum\limits_{i=m}^{n-1}\frac{(k-k_0)_\ell^{(i-m)}}{(i-m)!\ell^{(i-m)}}\Delta_\ell^iu(k_0+j)}$
\begin{equation}\label{1.8.7}
+\sum\limits_{r=0}^{\frac{k-k_0-j}{\ell}-n+m}\frac{(k-k_0-j-r\ell-\ell)_\ell^{(n-m-1)}}{(n-m-1)!\ell^{n-m-1}}\Delta_\ell^nu(k_0+j+r\ell),
\end{equation}
where $k_\ell^{(n)}=k(k-\ell)(k-2\ell)\dots(k-(n-1)\ell)$ for all $k\in[k_0,\infty)$, $j=k-k_0-\left[\frac{k-k_0}{\ell}\right]\ell$ and $0\leq m\leq n-1$.
\end{lemma}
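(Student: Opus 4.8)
The plan is to prove the identity by induction on $n$, the order $m$ with $0\le m\le n-1$ being kept fixed; alternatively one may first establish the case $m=0$ and then deduce the general statement by applying it to the function $\Delta_\ell^{m}u$ (with $n$ replaced by $n-m$) and re-indexing the first sum through $i\mapsto i+m$. I shall describe the direct induction on $n$. For the base case $n=m+1$ the first sum collapses to its single term $i=m$, which equals $\Delta_\ell^{m}u(k_0+j)$ since $(k-k_0)_\ell^{(0)}=1$, while the exponent $n-m-1$ equals $0$, so the remainder sum is just $\sum_{r=0}^{(k-k_0-j)/\ell-1}\Delta_\ell^{m+1}u(k_0+j+r\ell)$. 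Applying Theorem \ref{sum} to $v=\Delta_\ell^{m}u$, for which $\Delta_\ell v=\Delta_\ell^{m+1}u$, yields exactly this relation; here one uses that $k-k_0-j$ is an integer multiple of $\ell$, so $\bigl[(k-k_0-j-\ell)/\ell\bigr]=(k-k_0-j)/\ell-1$.

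For the inductive step, assume the formula holds for some $n\ge m+1$ and write $R_n$ for its remainder sum. In $R_n$ each factor $\Delta_\ell^{n}u(k_0+j+r\ell)$ is expanded by Theorem \ref{sum} applied to $v=\Delta_\ell^{n}u$ at the point $k_0+j+r\ell$, namely $\Delta_\ell^{n}u(k_0+j+r\ell)=\Delta_\ell^{n}u(k_0+j)+\sum_{s=0}^{r-1}\Delta_\ell^{n+1}u(k_0+j+s\ell)$. Substituting and splitting the result into two pieces, the first piece is $\Delta_\ell^{n}u(k_0+j)$ times $\sum_{r}\frac{(k-k_0-j-r\ell-\ell)_\ell^{(n-m-1)}}{(n-m-1)!\,\ell^{\,n-m-1}}$, whose value is precisely the coefficient of $\Delta_\ell^{n}u(k_0+j)$ needed to extend the first sum from $i=n-1$ to $i=n$; the second piece is a double sum which, after interchanging the order of summation, becomes $\sum_{s}\Bigl(\sum_{r\ge s+1}\frac{(k-k_0-j-r\ell-\ell)_\ell^{(n-m-1)}}{(n-m-1)!\,\ell^{\,n-m-1}}\Bigr)\Delta_\ell^{n+1}u(k_0+j+s\ell)$. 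Both inner sums over $r$ are evaluated using the $\ell$-analogue of the power rule, $\Delta_\ell\bigl(k_\ell^{(p)}\bigr)=p\ell\,k_\ell^{(p-1)}$ (equivalently $\Delta_\ell^{-1}k_\ell^{(p-1)}=k_\ell^{(p)}/(p\ell)$), which lets Theorem \ref{sum} telescope each of them to a single $\ell$-falling factorial of degree $n-m$; this produces exactly the first-sum term with $i=n$ together with the remainder $R_{n+1}$ for the order $n+1$, and a check of the summation limits shows that $(k-k_0-j)/\ell-n+m$ is replaced by $(k-k_0-j)/\ell-(n+1)+m$, as required.

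The verification of $\Delta_\ell\bigl(k_\ell^{(p)}\bigr)=p\ell\,k_\ell^{(p-1)}$ and the handling of the integration constants in Theorem \ref{sum} are routine. The main obstacle is purely bookkeeping: one must keep the limits of the interchanged double sum consistent with the index shift $n\mapsto n+1$, and one must observe that the boundary term created by the telescoping vanishes — it is an $\ell$-falling factorial of the form $(p\ell)_\ell^{(p+1)}$, which contains $0$ among its $p+1$ factors — since this vanishing is exactly what collapses the inner $r$-sum to one $\ell$-falling factorial of one higher degree. Once these points are in place, iterating the step from $n=m+1$ upward yields the stated identity.
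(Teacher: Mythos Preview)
The paper does not actually prove this lemma: it is quoted with the citation \cite{mmssubc} and no argument is supplied. So there is no ``paper's own proof'' to compare your attempt against.

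Your inductive scheme is the standard route to a discrete Taylor formula with remainder and is essentially correct. The base case $n=m+1$ is exactly Theorem~\ref{sum} applied to $v=\Delta_\ell^{m}u$, and the inductive step---expanding each $\Delta_\ell^{n}u(k_0+j+r\ell)$ via Theorem~\ref{sum}, interchanging the double sum, and telescoping the inner $r$-sum through the rule $\Delta_\ell\bigl(t_\ell^{(p)}\bigr)=p\ell\,t_\ell^{(p-1)}$---goes through as you describe. Your observation that the boundary term $((n-m-1)\ell)_\ell^{(n-m)}$ vanishes because one of its factors is $0$ is exactly the point that makes the telescoping clean.

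One remark worth recording: when you carry out the telescoping for the ``first piece'', the coefficient you obtain for $\Delta_\ell^{n}u(k_0+j)$ is
\[
\frac{(k-k_0-j)_\ell^{(n-m)}}{(n-m)!\,\ell^{\,n-m}},
\]
whereas the lemma as printed has $(k-k_0)_\ell^{(n-m)}$ in the first sum. For $i=m$ this discrepancy is invisible (the factorial power is $1$ either way), which is why your base case matches; but for $i>m$ your computation shows the ``$-j$'' should be there. This is almost certainly a typographical slip in the statement rather than a flaw in your argument, since the remainder term already carries the $-j$ and the two sums must be compatible under the induction. Your proof in fact yields the corrected version.
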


\begin{lemma}\label{le1.8.10} \cite{Msm2012}
Let $1\leq m\leq n-1$  and $u(k)$ be defined on $\mathbb{N}_{\ell}(k_0)$. Then,
\begin{enumerate}
	\item $\displaystyle{\liminf_{k\to\infty}\Delta_{\ell}^{m}u(k)>0}$ ~~\text{implies}~~ $\displaystyle{\lim_{k\to\infty}\Delta_{\ell}^{i}u(k)=\infty}$, $0\leq i \leq m-1$.
	\item $\displaystyle{\limsup_{k\to\infty}\Delta_{\ell}^{m}u(k)<0}$ ~~\text{implies}~~ $\displaystyle{\lim_{k\to\infty}\Delta_{\ell}^{i}u(k)=-\infty}$, $0\leq i \leq m-1$.
\end{enumerate}
\end{lemma}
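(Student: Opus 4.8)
The plan is to establish statement (1) and then obtain (2) for free by applying (1) to $-u(k)$, since $\Delta_{\ell}^{i}(-u)=-\Delta_{\ell}^{i}u$ and $\limsup_{k\to\infty}\Delta_{\ell}^{m}u(k)<0$ is the same as $\liminf_{k\to\infty}\Delta_{\ell}^{m}(-u)(k)>0$. So assume $\liminf_{k\to\infty}\Delta_{\ell}^{m}u(k)>0$. First I would note that on the domain $\mathbb{N}_{\ell}(k_0)$ every point has the form $k=k_0+r\ell$ with $r\in\mathbb{N}$, so the residue $j=k-k_0-\big[\frac{k-k_0}{\ell}\big]\ell$ is identically $0$; this disposes of all the $j$-bookkeeping and lets us treat $\Delta_{\ell}$ along $\mathbb{N}_{\ell}(k_0)$ as an ordinary forward difference, with Theorem \ref{sum} playing the role of the fundamental telescoping identity.

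The core is a one–step claim: if $1\le s\le m$ and $\liminf_{k\to\infty}\Delta_{\ell}^{s}u(k)>0$, then $\lim_{k\to\infty}\Delta_{\ell}^{s-1}u(k)=\infty$. To prove it, choose $c>0$ and $K\in\mathbb{N}_{\ell}(k_0)$ with $\Delta_{\ell}^{s}u(k)\ge c$ for all $k\ge K$ in $\mathbb{N}_{\ell}(k_0)$ (this is just the definition of $\liminf$). Writing $w=\Delta_{\ell}^{s-1}u$, we have $\Delta_{\ell}w=\Delta_{\ell}^{s}u$, so Theorem \ref{sum} with $j=0$ gives
\[
w(k)-w(k_0)=\sum_{r=0}^{\left[\frac{k-k_0-\ell}{\ell}\right]}\Delta_{\ell}^{s}u(k_0+r\ell).
\]
Splitting this sum at the index $R_0$ beyond which $k_0+r\ell\ge K$, the initial block is a fixed constant while the remaining block is bounded below by $c$ times the number of its terms; since that number tends to $\infty$ as $k\to\infty$, we conclude $w(k)\to\infty$.

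Finally, I would iterate the one–step claim. Applied with $s=m$ it yields $\lim_{k\to\infty}\Delta_{\ell}^{m-1}u(k)=\infty$; a limit equal to $+\infty$ certainly has positive $\liminf$, so the claim applies again with $s=m-1$ to give $\lim_{k\to\infty}\Delta_{\ell}^{m-2}u(k)=\infty$, and so on, the process terminating exactly when $s=1$ and producing $\lim_{k\to\infty}\Delta_{\ell}^{0}u(k)=\lim_{k\to\infty}u(k)=\infty$. This is precisely the assertion $\lim_{k\to\infty}\Delta_{\ell}^{i}u(k)=\infty$ for all $0\le i\le m-1$. (Alternatively, one could collapse the iteration into a single step by invoking Lemma \ref{cor1.8.6} with its $(m,n)$ specialized to $(i,m)$: the finite ``Taylor'' block is a fixed function of $k$ of lower polynomial growth, while the remainder block has a nonnegative kernel and an integrand eventually $\ge c$, hence diverges.) The only points requiring care are extracting the uniform eventual lower bound $c$ from the hypothesis on the $\liminf$ and checking that the count of ``good'' summands in Theorem \ref{sum} is unbounded; neither is a genuine obstacle, so the proof is in essence a careful bookkeeping of the discrete summation formula.
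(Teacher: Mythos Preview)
Your argument is correct. The one--step claim (if $\liminf_{k\to\infty}\Delta_{\ell}^{s}u(k)>0$ then $\lim_{k\to\infty}\Delta_{\ell}^{s-1}u(k)=\infty$) is exactly the right engine, its proof via Theorem~\ref{sum} is clean, and the downward iteration together with the symmetry $u\mapsto -u$ for part~(2) is the natural way to finish. Your observation that $j\equiv 0$ on $\mathbb{N}_{\ell}(k_0)$ correctly strips away the residue bookkeeping.

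As for comparison: the paper does not actually supply a proof of this lemma. It is quoted from \cite{Msm2012} and stated without argument, so there is no ``paper's own proof'' to match against. Your proof is self-contained and would serve as a complete justification were one needed; the alternative you sketch using Lemma~\ref{cor1.8.6} is also viable but heavier than necessary, since the iterated telescoping already does the job with minimal machinery.
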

\begin{lemma} \label{th1.8.11}Let $u(k)$ be defined on $\mathbb{N}_{\ell}(k_0)$, and $u(k)>0$ with $\Delta_{\ell}^{n}(k)$ is of constant sign on $\mathbb{N}_{\ell}(k_0)$ and not zero. Then, there $\exists$ an integer $m$, $0\leq m\leq n$ with $n+m$ odd for 
$\Delta_{\ell}^{n}(k)\leq 0$ or $n+m$ even for $\Delta_{\ell}^{n}(k)\geq 0$ and such that $m \geq 1$ implies
\begin{equation}\label{eq2.2}
	\Delta_{\ell}^{i}u(k) > 0 ~~\text{for all large  }~~k\in \mathbb{N}_{\ell}(k_0), 1\leq i \leq m-1
\end{equation}
and $m\leq n-1$ implies
 \begin{equation}\label{eq2.1}
	 (-1)^{m+i}\Delta_{\ell}^{i}u(k) > 0 ~~\text{for  all }~~k\in \mathbb{N}_{\ell}(k_0), m\leq i \leq n-1.
\end{equation}
for all large $n\in\mathbb{N}_{\ell}(k_0)$ and $n\geq N$.
\end{lemma}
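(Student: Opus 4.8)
The plan is to prove this $\Delta_\ell$--version of the classical Kiguradze-type lemma by induction on the order of the difference, with Lemma \ref{le1.8.10} supplying the analytic input; throughout, ``eventually'' means ``for all sufficiently large $k\in\mathbb{N}_\ell(k_0)$''.

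\textbf{Step 1.} I first show that each of $\Delta_\ell^iu$, $0\le i\le n$, is eventually of one strict sign. Since $u$ is defined on the single arithmetic progression $\mathbb{N}_\ell(k_0)$, the relation $\Delta_\ell v(k)=v(k+\ell)-v(k)$ shows that if $\Delta_\ell v$ is eventually of one strict sign then $v$ is eventually strictly monotone on $\mathbb{N}_\ell(k_0)$; hence $\lim_{k\to\infty}v(k)$ exists in $[-\infty,+\infty]$ and $v$ itself is eventually of one strict sign (it settles to the sign of its limit, or is of the sign opposite to its direction of monotonicity when the limit is $0$). Applying this with $v=\Delta_\ell^{n-1}u$, then $\Delta_\ell^{n-2}u$, and so on down to $v=u$ — the hypothesis being that $\Delta_\ell^nu$ is of one strict sign — gives the claim. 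Let $\varepsilon_i\in\{+1,-1\}$ denote the eventual sign of $\Delta_\ell^iu$; by hypothesis $\varepsilon_0=+1$, while $\varepsilon_n=+1$ if $\Delta_\ell^nu\ge0$ and $\varepsilon_n=-1$ if $\Delta_\ell^nu\le0$.

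\textbf{Step 2.} I then establish the sign constraint that forces the conclusion. If $\varepsilon_i=\varepsilon_{i+1}=-1$ for some $i$ with $1\le i\le n-1$, then $\Delta_\ell^iu<0$ eventually and $\Delta_\ell(\Delta_\ell^iu)=\Delta_\ell^{i+1}u<0$ eventually, so $\Delta_\ell^iu$ is eventually decreasing and negative; thus $\limsup_{k\to\infty}\Delta_\ell^iu(k)<0$, and Lemma \ref{le1.8.10}(2) yields $\Delta_\ell^ju(k)\to-\infty$ for $0\le j\le i-1$, in particular $u(k)=\Delta_\ell^0u(k)\to-\infty$ since $i\ge1$, contradicting $u>0$. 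The mirror argument, using Lemma \ref{le1.8.10}(1), shows that if $\varepsilon_i=\varepsilon_{i+1}=+1$ for some $1\le i\le n-1$ then $\Delta_\ell^ju(k)\to+\infty$ for all $0\le j\le i-1$; this produces no contradiction but is the form used next.

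\textbf{Step 3.} Let $m$ be the largest index in $\{0,1,\dots,n\}$ with $\varepsilon_0=\varepsilon_1=\cdots=\varepsilon_m=+1$; it is well defined because $\varepsilon_0=+1$. If $m=n$ the statement holds at once (then $\Delta_\ell^nu\ge0$ and $n+m=2n$ is even, and \eqref{eq2.1} is vacuous). If $m\le n-1$, then $\varepsilon_{m+1}=-1$ by maximality, and I prove by induction on $i$ from $m$ to $n$ that $\varepsilon_i=(-1)^{i-m}$: the cases $i=m$ and $i=m+1$ are known, and in the inductive step ($m+1\le i\le n-1$) either $\varepsilon_i=-1$, in which case $i\ge m+1\ge1$ and Step 2 forces $\varepsilon_{i+1}=+1$, or $\varepsilon_i=+1$, in which case $i\ge m+2$ (so that $m+1\le i-1$) and $\varepsilon_{i+1}=+1$ would give, by Step 2, $\Delta_\ell^{m+1}u(k)\to+\infty$, contradicting $\varepsilon_{m+1}=-1$; hence $\varepsilon_{i+1}=-1$. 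In all cases $\varepsilon_{i+1}=(-1)^{i+1-m}$. Consequently $\Delta_\ell^iu>0$ eventually for $0\le i\le m$, which gives \eqref{eq2.2} on $1\le i\le m-1$, and $(-1)^{i+m}\Delta_\ell^iu=(-1)^{i-m}\Delta_\ell^iu>0$ eventually for $m\le i\le n-1$, which is \eqref{eq2.1}; and $i=n$ gives $\varepsilon_n=(-1)^{n-m}$, so $n+m$ is odd exactly when $\Delta_\ell^nu\le0$ and even exactly when $\Delta_\ell^nu\ge0$, as required.

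\textbf{Expected main obstacle.} The only delicate point is Step 1: turning ``$\Delta_\ell v$ has a fixed strict sign'' into ``$v$ is eventually of a fixed strict sign'' — together with the observation that reading monotonicity along $\mathbb{N}_\ell(k_0)$ (not $\mathbb{N}$) is exactly what is needed — and using the hypothesis that $\Delta_\ell^nu$ is nowhere zero to exclude degenerate cases in which some $\Delta_\ell^iu$ would vanish identically. Once this is in place the remainder is the familiar Kiguradze bookkeeping, with Lemma \ref{le1.8.10} doing the analytic work in Step 2.
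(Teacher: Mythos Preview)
Your argument is correct and is the standard Kiguradze bookkeeping powered by Lemma~\ref{le1.8.10}, which is exactly the engine the paper uses as well. The organization, however, differs in two visible ways. First, the paper splits into Case~1 ($\Delta_\ell^n u\le 0$) and Case~2 ($\Delta_\ell^n u\ge 0$) and argues each separately, whereas you treat both at once by recording the sign string $(\varepsilon_0,\dots,\varepsilon_n)$ and reading off the parity of $n+m$ only at the end. Second, the paper defines $m$ ``from above'' as the \emph{smallest} index for which the alternating pattern $(-1)^{m+i}\Delta_\ell^i u>0$ holds on $m\le i\le n-1$ (after first proving $\Delta_\ell^{n-1}u>0$ directly), and then argues downward that $\Delta_\ell^{m-1}u,\dots,\Delta_\ell u$ must be positive; you instead define $m$ ``from below'' as the \emph{largest} index with $\varepsilon_0=\cdots=\varepsilon_m=+1$ and then push the alternation upward by your two-consecutive-signs dichotomy. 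These are the same $m$ and the same contradictions (two consecutive negatives force $u\to-\infty$; two consecutive positives force divergence to $+\infty$ below), so the mathematical content is identical; your packaging is somewhat more uniform, while the paper's case split makes the role of the top-level sign hypothesis more explicit. The only point where your write-up is marginally looser than the paper's is Step~1: the lemma's hypothesis is ``constant sign and not identically zero'', not ``nowhere zero'', so $\Delta_\ell^n u$ need not be strictly signed and the descent to strict signs for $\Delta_\ell^{n-1}u,\dots$ needs the ``non-increasing and not constant $\Rightarrow$ eventually strictly signed'' observation that the paper invokes in its Case~1; you flag this correctly as the main obstacle, and the fix is routine.
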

\begin{proof}
There are two possible cases to consider.\\
{\bf Case 1.} $\Delta_{\ell}^{m}u(k)\leq 0$ on $\mathbb{N}_{\ell}(k_0)$. First we prove that $\Delta_{\ell}^{n-1}u(k)>0$ on $\mathbb{N}_{\ell}(k_0)$. If not, there $\exists$ some $k_1 \geq k_0$ in $\mathbb{N}_{\ell}(k_0)$ such that $\Delta_{\ell}^{n-1}u(k_1)\leq 0$. As  $\Delta_{\ell}^{n-1}u(k)>0$ is decreasing and not identically constant on $\mathbb{N}_{\ell}(k_0)$, there $\exists$ $k_2\in \mathbb{N}_{\ell}(k_1)$ such that $\Delta_{\ell}^{n-1}u(k)\leq \Delta_{\ell}^{n-1}u(k_2)\leq \Delta_{\ell}^{n-1}u(k_1)\leq 0$ for all $k\in \mathbb{N}_{\ell}(k_2)$, But, from Lemma \ref{le1.8.10} we find $\lim_{k\to\infty}u(k)=-\infty$ which is a contradicts to $u(k)>0$. Thus, $\Delta_{\ell}^{n-1}u(k)>0$ on $\mathbb{N}_{\ell}(k_0)$ and there $\exists$ a smallest integer $m$, $0\leq m\leq n-1$ with $n+m$ odd and 
\begin{equation}\label{eq1.8.11}
	(-1)^{m+i}\Delta_{\ell}^{i}u(k)>0 ~~\text{on}~~\mathbb{N}_{\ell}(k_0), m\leq i\leq n-1.
\end{equation}
Next let $m>1$ and 
\begin{equation}\label{eq1.8.12}
	\Delta_{\ell}^{m-1}u(k)<0 ~~\text{on}~~\mathbb{N}_{\ell}(k_0),
\end{equation}
then once again form Lemma \ref{le1.8.10} it follows that 
\begin{equation}\label{eq1.8.13}
	\Delta_{\ell}^{m-2}u(k)<0 ~~\text{on}~~\mathbb{N}_{\ell}(k_0).
\end{equation}
Inequalities (\ref{eq1.8.11})-(\ref{eq1.8.13}) can be unified to 
\begin{equation}\label{eq1.8.14}
	(-1)^{(m-2)+i}\Delta_{\ell}^{i}u(k)>0 ~~\text{on}~~\mathbb{N}_{\ell}(k_0), m-2\leq i \leq n-1,
\end{equation}
which is contrary to the definition of $m$. So, (\ref{eq1.8.12}) fails and $\Delta_{\ell}^{m-1}u(k)\geq 0$ on $\mathbb{N}_{\ell}(a)$. From (\ref{eq1.8.11}), $\Delta_{\ell}^{m-1}u(k)$ is non-decreasing and hence $\lim_{k\to\infty}\Delta_{\ell}^{m-1}u(k)>0$. If $m>2$, we find from Lemma \ref{le1.8.10} that $\lim_{k\to\infty}\Delta_{\ell}^{i}u(k)=\infty$, $1 \leq i \leq m-2$. Thus, $\Delta_{\ell}^{i}u(k)> 0$ for all large $k\in\mathbb{N}_{\ell}(k_0)$, $1\leq i \leq m-1$.\\
{\bf Case 2.} $\Delta_{\ell}^{n}u(k)\geq 0$ on $\mathbb{N}_{\ell}(k_0)$. Let $k_3\in\mathbb{N}_{\ell}(k_2)$ be such that $\Delta_{\ell}^{n-1}u_{k_3}\geq 0$, then since $\Delta_{\ell}^{n-1}u(k)$ is non-decreasing and not identically constant. There exists some $k_4\in\mathbb{N}_{\ell}(k_3)$ such that $\Delta_{\ell}^{n-1}u(k)> 0$ for all $k\in\mathbb{N}_{\ell}(k_4)$. Thus, $\lim_{k\to\infty}\Delta_{\ell}^{n-1}u(k)> 0$ and from Lemma \ref{le1.8.10} $\lim_{k\to\infty}\Delta_{\ell}^{i}u(k) =\infty$, $1\leq i \leq n-2$ and so $\Delta_{\ell}^{i}u(k)> 0$ for all large $k$ in $\mathbb{N}_{\ell}(k_0)$, $1 \leq i \leq n-1$. This proves the theorem $m=n$. In case $\Delta_{\ell}^{n-1}u(k)< 0$ for all $k\in \mathbb{N}_{\ell}(k_0)$, we find from Lemma \ref{le1.8.10} that $\Delta_{\ell}^{n-2}u(k)> 0$ for all  $k\in\mathbb{N}_{\ell}(k_0)$. The proof will be as in Case 1.
\end{proof}
\begin{lemma}\label{lem2}
Let $u(k)$ be defined on $\mathbb{N}_{\ell}(k_0)$, and $u_k>0$ with $\Delta_{\ell}^{n}u(k) \leq 0$  on $\mathbb{N}_{\ell}(k_0)$ and not zero. Then, for a large integer $k_1$ in $\mathbb{N}_{\ell}(k_0)$  and $k\in \mathbb{N}_{\ell}(k_1)$
\begin{equation}
	u(k)\geq \frac{(k-k_1)^{(n-1)}}{(n-1)!\ell^{n-1}}\Delta_{\ell}^{n-1}u(2^{n-2}k)
\end{equation}
where $k_{\ell}^{(n)}=k(k-\ell)(k-2\ell)\cdots(k-(n-1)\ell)$. Now if $\{u(n)\}$ is increasing, then
\begin{equation}
	u(k)\geq \frac{1}{(n-1)!\ell^{n-1}}\left(\frac{k}{2^{n-2}}\right)^{(n-1)}\Delta_{\ell}^{n-1}u(k) ~~\text{for all}~~ k\geq2^{n-2}k.
\end{equation}
\end{lemma}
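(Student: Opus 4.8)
Write $w=\Delta_\ell^{n-1}u$. The first move is a reduction. Since $u(k)>0$, $\Delta_\ell^n u(k)=\Delta_\ell w(k)\le 0$, and $\Delta_\ell^n u$ is not identically zero, Lemma \ref{th1.8.11} (Case~1) produces an integer $m$, $0\le m\le n$, with $n+m$ odd, such that for all $k$ past some $k_1\in\mathbb{N}_\ell(k_0)$ one has $\Delta_\ell^i u(k)>0$ for $0\le i\le m$ and $(-1)^{m+i}\Delta_\ell^i u(k)>0$ for $m\le i\le n-1$; in particular $w(k)>0$ there, and since $\Delta_\ell w=\Delta_\ell^n u\le 0$, the sequence $w$ is non-increasing on $\mathbb{N}_\ell(k_1)$, so $w(s)\ge w(t)$ whenever $k_1\le s\le t$. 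The plan is to recover $u=\Delta_\ell^0 u$ from $w$ by applying Theorem \ref{sum} $n-1$ times in succession, estimating each summand below by this monotonicity and collapsing the resulting sums via the telescoping identity $\Delta_\ell(k-k_1)_\ell^{(p+1)}=(p+1)\ell\,(k-k_1)_\ell^{(p)}$ that underlies Lemma \ref{cor1.8.6}.

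Suppose first that $m\ge n-1$, so $\Delta_\ell^i u(k)>0$ for every $0\le i\le n-1$ and $k\ge k_1$. Then a downward induction on $i$ from $n-2$ to $0$ gives, for $k\ge k_1$ in a fixed residue class modulo $\ell$, the bound
\[
\Delta_\ell^{i}u(k)\ \ge\ \frac{(k-k_1)_\ell^{(n-1-i)}}{(n-1-i)!\,\ell^{n-1-i}}\,w(k).
\]
Indeed, for $i=n-2$ this is Theorem \ref{sum} applied to $\Delta_\ell^{n-2}u$ after discarding $\Delta_\ell^{n-2}u(k_1)\ge 0$ and using $w(s)\ge w(k)$ on the range; the step from $i+1$ to $i$ feeds the bound at order $i+1$ into $\Delta_\ell^{i}u(k)\ge\Delta_\ell^{i}u(k)-\Delta_\ell^{i}u(k_1)=\sum_{s}\Delta_\ell^{i+1}u(s)$, again uses $w(s)\ge w(k)$, discards $\Delta_\ell^{i}u(k_1)\ge 0$, and sums $(s-k_1)_\ell^{(n-2-i)}$ over the range by the telescoping identity. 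At $i=0$ this reads $u(k)\ge\frac{(k-k_1)_\ell^{(n-1)}}{(n-1)!\,\ell^{n-1}}w(k)$, which already implies the first assertion of the Lemma, since there $w$ is evaluated at the larger point $2^{n-2}k$, where it is smaller. The second assertion then follows from the first by replacing $k$ with $k/2^{n-2}$ and using that $u$ is increasing: for $k\ge 2^{n-2}k_1$, $u(k)\ge u(k/2^{n-2})\ge\frac{(k/2^{n-2}-k_1)_\ell^{(n-1)}}{(n-1)!\,\ell^{n-1}}w(k)$.

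The genuine obstacle is the remaining case $m\le n-2$, where Lemma \ref{th1.8.11} forces $\Delta_\ell^{n-2}u,\Delta_\ell^{n-3}u,\dots,\Delta_\ell^{m+1}u$ to alternate in sign, so the boundary values $\Delta_\ell^i u(k_1)$ can no longer be discarded and the clean iteration of the previous paragraph breaks down precisely at the orders where a sign change occurs. My plan there is to integrate the other way, running the $n-1$ summations over tails $\sum_{s\ge\,\cdot}$ instead of finite ranges: these tails converge (each $\Delta_\ell^i u$ with $i\ge m$ is monotone and bounded), and the boundary terms are now the finite limits $\lim_{k\to\infty}\Delta_\ell^i u(k)$, which are dropped because they carry the sign favourable to the inequality. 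Bounding $u(k)$ then reduces to estimating an $(n-1)$-fold nested tail sum of $w$ from below; truncating $n-2$ of these nested tails at successive power-of-two multiples of $k$ and replacing $w$ throughout by its value at the farthest point reached (monotonicity!) is exactly what forces $w$ to be evaluated at $2^{n-2}k$, while summing the resulting weights by the telescoping identity rebuilds $(k)_\ell^{(n-1)}/((n-1)!\,\ell^{n-1})$ up to the stated constant. The delicate part is checking that every limit discarded in this process has the sign prescribed by the parity of $n+m$ in Lemma \ref{th1.8.11}; the falling-factorial algebra and the accounting of the powers of $2$ are routine by comparison.
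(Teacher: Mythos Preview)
Your approach is essentially the same as the paper's. Both arguments invoke Lemma~\ref{th1.8.11} to obtain the Kneser index $m$, then split the recovery of $u$ from $\Delta_\ell^{n-1}u$ into two blocks: for orders $i$ with $m\le i\le n-1$ one sums over tails $\sum_{s\ge k}$, discards the (finite, correctly-signed) limits $\Delta_\ell^i u(\infty)$, truncates each tail at a doubling of the current argument, and uses the monotonicity of $\Delta_\ell^{n-1}u$ to pull it out evaluated at $2^{n-1-m}k$; for orders $0\le i\le m-1$ one switches to forward sums $\sum_{s=k_1}^{k-\ell}$ and discards the positive boundary values $\Delta_\ell^i u(k_1)$. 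The only organisational difference is that you treat the case $m\ge n-1$ (all derivatives positive) separately and observe that forward summation alone already gives the sharper bound with $\Delta_\ell^{n-1}u(k)$ rather than $\Delta_\ell^{n-1}u(2^{n-2}k)$; the paper runs the general argument uniformly and only sketches the forward block, stopping at $\Delta_\ell^{m-1}u$ with the words ``which completes the proof''. Your derivation of the second inequality from the first (substitute $k\mapsto k/2^{n-2}$ and use that $u$ is increasing) is also something the paper leaves implicit.
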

\begin{proof}
Lemma \ref{th1.8.11} follows that $(-1)^{n+i}\Delta_{\ell}^{i}u(k)>0$ and $\Delta_{\ell}^{i}u(k)>0$ for large $k$ in $\mathbb{N}_{\ell}(k_0)$, say, for all $k\geq k_1$ in $\mathbb{N}_{\ell}(k_0)$, $1\leq i \leq m-1$. Using these inequalities, we obtain
\begin{flalign*}
	-\Delta_{\ell}^{n-2}u(k)&=-\Delta_{\ell}^{n-2}u(\infty)+\sum_{r=0}^{\infty}\Delta_{\ell}^{n-1}u(k+r\ell)\\
&\geq \sum_{r=0}^{\frac{k}{\ell}}\Delta_{\ell}^{n-1}u(k+r\ell) \geq \frac{1}{\ell}\Delta_{\ell}^{n-1}u(2k)(k)_{\ell}^{(1)}\\
\Delta_{\ell}^{n-3}u(k)&= \Delta_{\ell}^{n-3}u(\infty)-\sum_{r=0}^{\infty}\Delta_{\ell}^{n-2}u(k+r\ell) \\
&\geq \frac{1}{\ell}\sum_{r=0}^{\frac{k}{\ell}}(k+r\ell)_{\ell}^{(1)}\Delta_{\ell}^{n-1}u(2(k+r\ell))
\geq \Delta_{\ell}^{n-1}u(2^2k)\frac{1}{2!\ell^2}(k)_{\ell}^{(2)}\\
&\cdots\quad\cdots\quad\cdots\\
\Delta_{\ell}^mu(k)&\geq \Delta_{\ell}^{n-1}u(2^{n-m-1}k)\frac{1}{(n-m-1)!\ell^{n-m-1}}(k)_{\ell}^{(n-m-1)}.
\end{flalign*}
Next, we get
\begin{flalign*}
\Delta_\ell^{m-1}u(k)&=\Delta_\ell^{m-1}u(k_1+j)+\sum\limits_{r=0}^{\frac{k-k_1-j-\ell}{\ell}}\Delta_\ell^mu(k_1+j+r\ell)\\
	&\geq  \sum\limits_{r=0}^{\frac{k-k_1-j-\ell}{\ell}}\frac{(k_1+j+r\ell)^{(n-m-1)}}{(n-m-1)!\ell^{n-m-1}}\Delta_{\ell}^{n-1}u(2^{n-m-1}(k_1+j+r\ell))\\
	&\geq  \frac{(k-k_1)^{(n-m)}}{(n-m)!\ell^{n-m}}\Delta_{\ell}^{n-1}u(2^{n-m-1}(k)),
\end{flalign*}
which completes the proof.
\end{proof}

\begin{lemma}\label{lem3}
Consider  that 
\begin{equation}
	\sum\limits_{r=0}^{\infty}\frac{1}{a(k+j+r\ell)}=\infty
\end{equation}
and let $\{x(k)\}$ be a positive solution for  $(\ref{2e})$. Then $\exists$ $k_1\geq k_0$ such that 
\begin{equation*}
	z(k)>0, \Delta_{\ell}z(k)>0, \Delta^{m-1}_{\ell} z(k)>0 ~~\text{and}~~\Delta^{m}_{\ell} z(k)\leq 0~~\text{for all}~~ k\geq k_1
\end{equation*}
\end{lemma}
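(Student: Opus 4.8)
The plan is to argue from the structural equation and the sign condition $q(k)f(x(k-\rho\ell))\ge 0$ that the leading term is eventually monotone, then apply Lemma \ref{th1.8.11} to pin down the admissible value of the integer $m$. First I would note that since $\{x(k)\}$ is eventually positive and, by the standard assumptions on a neutral term of this type (namely $p(k)\ge 0$ and $f(u)u>0$, $q(k)\ge 0$), we have $z(k)=x(k)+p(k)x(\tau(k))>0$ for all large $k$, say $k\ge k_1$. Rewriting \eqref{2e} gives
\begin{equation*}
\Delta_{\ell}\bigl(a(k)\Delta_{\ell}^{m-1}z(k)\bigr)=-q(k)f(x(k-\rho\ell))\le 0,
\end{equation*}
so $a(k)\Delta_{\ell}^{m-1}z(k)$ is nonincreasing on $\mathbb{N}_\ell(k_1)$ (and not eventually constant, since $q$ is not identically zero).

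Next I would rule out $a(k)\Delta_{\ell}^{m-1}z(k)$ ever becoming negative. Suppose to the contrary that $a(k_2)\Delta_{\ell}^{m-1}z(k_2)<0$ for some $k_2\ge k_1$; then by monotonicity $a(k)\Delta_{\ell}^{m-1}z(k)\le a(k_2)\Delta_{\ell}^{m-1}z(k_2)=:-c<0$ for $k\ge k_2$, so $\Delta_{\ell}^{m-1}z(k)\le -c/a(k)$. Summing this inequality via Theorem \ref{sum} and invoking the divergence hypothesis $\sum_{r=0}^{\infty}1/a(k+j+r\ell)=\infty$ forces $\Delta_{\ell}^{m-2}z(k)\to -\infty$, and then Lemma \ref{le1.8.10} (part 2) gives $z(k)\to -\infty$, contradicting $z(k)>0$. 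Hence $a(k)\Delta_{\ell}^{m-1}z(k)>0$ for all $k\ge k_1$, and since $a(k)>0$ we get $\Delta_{\ell}^{m-1}z(k)>0$; being the difference of the nonincreasing positive quantity $a(k)\Delta_{\ell}^{m-1}z(k)$ divided by $a(k)$, one also concludes $\Delta_{\ell}^{m}z(k)\le 0$ eventually (this needs care if $a$ is not monotone — the honest route is simply that $\Delta_{\ell}^{m}z$ has the sign of $\Delta_{\ell}(a\Delta_{\ell}^{m-1}z)\cdot$ something only when $a$ is nondecreasing, so I would state the monotonicity assumption on $a$ explicitly, or else work directly with $a(k)\Delta_{\ell}^{m-1}z(k)$ as the monotone object).

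With $z(k)>0$ and $\Delta_{\ell}^{m}z(k)\le 0$, not identically zero, on $\mathbb{N}_\ell(k_1)$, I would apply Lemma \ref{th1.8.11} with $n=m$: there is an integer $\mu$, $0\le \mu\le m$, with $m+\mu$ odd, such that $\Delta_{\ell}^i z(k)>0$ for $1\le i\le \mu-1$ and $(-1)^{\mu+i}\Delta_{\ell}^i z(k)>0$ for $\mu\le i\le m-1$. Since we have already shown $\Delta_{\ell}^{m-1}z(k)>0$, the sign pattern $(-1)^{\mu+(m-1)}>0$ forces $m+\mu-1$ to be even, i.e. $m+\mu$ odd — consistent — and more importantly it forces $\mu=m$ (otherwise $\mu\le m-1$ and the alternating signs would make $\Delta_{\ell}^{m-1}z$ agree with $(-1)^{\mu+m-1}$, which is positive only if $m-1-\mu$ is even; combined with $m+\mu$ odd this is impossible unless $\mu=m$). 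With $\mu=m$, Lemma \ref{th1.8.11} yields $\Delta_{\ell}^{i}z(k)>0$ for all $1\le i\le m-1$; in particular $\Delta_{\ell}z(k)>0$, which together with $z(k)>0$, $\Delta_{\ell}^{m-1}z(k)>0$, $\Delta_{\ell}^m z(k)\le 0$ is exactly the claim.

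The main obstacle I anticipate is the step deducing $\Delta_{\ell}^{m}z(k)\le 0$: the equation only gives that $a(k)\Delta_{\ell}^{m-1}z(k)$ is nonincreasing, and without a monotonicity hypothesis on $a(k)$ this does not immediately transfer to $\Delta_{\ell}^{m-1}z(k)$ itself being nonincreasing. The cleanest fix is to carry $a(k)\Delta_{\ell}^{m-1}z(k)$ (rather than $\Delta_{\ell}^{m-1}z(k)$) as the monotone quantity throughout, applying Lemma \ref{th1.8.11} to $z$ only for the lower-order derivatives $1\le i\le m-2$ and handling $\Delta_{\ell}^{m-1}$ and $\Delta_{\ell}^{m}$ separately via the sign of $a(k)\Delta_{\ell}^{m-1}z(k)$ and of its $\Delta_\ell$; alternatively, one simply reads off from the hypotheses of the paper that $a(k)$ is assumed positive and nondecreasing, which makes the transfer automatic. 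Everything else is a routine application of the summation formula (Theorem \ref{sum}) and the limit lemmas (Lemmas \ref{le1.8.10} and \ref{th1.8.11}) already established in the excerpt.
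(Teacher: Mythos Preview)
Your argument for $z(k)>0$, $\Delta_\ell^{m-1}z(k)>0$ and $\Delta_\ell^m z(k)\le 0$ is essentially the paper's proof: the paper also sums $\Delta_\ell^{m-1}z\le c/a$ against the divergent series to get $\Delta_\ell^{m-2}z\to-\infty$, then cascades informally (``continue this process'') down to $z(k)<0$; and it disposes of $\Delta_\ell^m z\le 0$ exactly as you anticipated, by invoking explicitly that $\{a(k)\}$ is positive and increasing so that $\Delta_\ell(a\Delta_\ell^{m-1}z)<0$ forces $\Delta_\ell^m z\le 0$.

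Where you diverge from the paper is the conclusion $\Delta_\ell z(k)>0$: the paper's proof simply never establishes it (the statement is in the lemma but no argument is given). You attempt to supply one via Lemma~\ref{th1.8.11}, which is the right instinct, but your parity deduction forcing $\mu=m$ is wrong. The two conditions you combine, ``$m-1-\mu$ even'' and ``$m+\mu$ odd'', are \emph{equivalent} (each says $m-\mu$ is odd), so together they exclude nothing; e.g.\ $m=4$, $\mu=1$ satisfies both yet $\mu\ne m$, and in fact for $m$ even the case $\mu=m$ is never permitted since $m+\mu$ must be odd. The repair is to note that Lemma~\ref{th1.8.11} already yields $\Delta_\ell z>0$ for \emph{every} $\mu\ge 1$ (from \eqref{eq2.2} if $\mu\ge 2$, and from \eqref{eq2.1} with $i=1$ if $\mu=1$), so the only case to exclude is $\mu=0$; that case requires $m$ odd and gives $\Delta_\ell z<0$, and ruling it out needs a parity hypothesis on $m$ or an additional argument that neither you nor the paper provides.
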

\begin{proof}
Since $\{x(k)\}$ is a positive solution for eq.  (\ref{2e}) there is $k\geq k_0$ such that $x(k)>0$ and $x_{\tau(k)}>0$ for all $k\geq k_1$. Then by the definition of $z(k)$, we have $z(k)>0$ for all $k\geq k_1$. From the equation $(\ref{c02})$, we also have
\begin{equation}\label{eq2.4}
	\Delta_{\ell}\left(a(k)\Delta_{\ell}^{m-1}z(k)\right)=-q(k)f(x(k)-\rho\ell)<0 ~~\text{for all}~~k\geq k_0.
\end{equation}
Therefore $a(k)\Delta_{\ell}^{m-1}z(k)<0$ for all $k\geq k_1$. Since $a(k)>0$, eventually we have either $\Delta_{\ell}^{m-1}z(k)<0$  or $\Delta_{\ell}^{m-1}z(k)>0$. We shall prove that $\Delta_{\ell}^{m-1}z(k)>0$. If not, then $\exists$ $c<0$ such that 
\begin{equation*}
	a(k)\Delta_{\ell}^{m-1}z(k)\leq c <0~~\text{for all}~~ k\geq k_1.
\end{equation*}
then it follows
\begin{equation*}
	\Delta_{\ell}^{m-2}z(k)-\Delta_{\ell}^{m-2}z(k_1)  \leq c \sum\limits_{r=0}^{\frac{k-k_1-j-\ell}{\ell}}\frac{1}{a(k_1+j+r\ell)}.
\end{equation*}
Letting $k\to\infty$ in the last inequality, we see that $\Delta_{\ell}^{m-2}z(k)\to -\infty$. That is $\Delta_{\ell}^{m-2}z(k)<0$ eventually. Now $\Delta_{\ell}^{m-2}z(k)<0$  implies $\Delta_{\ell}^{m-3}z(k)<0$. If we Continue this process we get $z(k)<0$, it is a contrary to assumption. Thus $\Delta_{\ell}^{m-1}z(k)>0$. Moreover $\{a(k)\}$ is positive and increasing and $\Delta_{\ell}\left(a(k)\Delta_{\ell}^{m-1}z(k)\right)<0$ for all $k\geq k_1$, we have $\Delta_{\ell}^{m}z(k)\leq0$ for all $k\geq k_1$.
\end{proof}
\begin{lemma}\label{lem4}\cite{goyri01}
The first order generalized difference inequality
\begin{equation*}
	\Delta_{\ell}y(k)+p(k)y(k-\rho\ell)\leq 0
\end{equation*}
eventually has no positive solution if 
\begin{equation}
	\liminf\limits_{k\to\infty}\sum\limits_{r=0}^{\frac{k-(k-\rho\ell)-j-\ell}{\ell}}p(k-\rho\ell+j+r\ell)>\left(\frac{\rho\ell}{\rho\ell+1}\right)^{\rho\ell+1}
\end{equation}
or 
\begin{equation}
	\limsup\limits_{k\to\infty}\sum\limits_{r=0}^{\frac{k-(k-\rho\ell)-j}{\ell}}p(k-\rho\ell+j+r\ell)>1.
\end{equation}
\end{lemma}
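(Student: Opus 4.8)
The plan is to argue by contradiction, exploiting the monotonicity that the inequality forces and the telescoping supplied by Theorem \ref{sum}.

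Assume $\{y(k)\}$ is eventually positive, say $y(k)>0$ for $k\ge k_1$ along a chain $\mathbb{N}_{\ell}(j)=\{j,j+\ell,j+2\ell,\dots\}$. Then $y(k-\rho\ell)>0$ for all large $k$, and the inequality gives
\begin{equation*}
\Delta_{\ell}y(k)=y(k+\ell)-y(k)\le -p(k)\,y(k-\rho\ell)\le 0,
\end{equation*}
so $y$ is eventually non-increasing on the chain; in particular $y(k-\rho\ell)\ge y(k)$ for all large $k$, and $y(k)\downarrow L\ge 0$. This is the common starting point for both criteria.

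For the $\limsup$ criterion I would sum $\Delta_{\ell}y$ over the delay window. By Theorem \ref{sum}, with base point $k-\rho\ell$, the left-hand side telescopes to $y(k+\ell)-y(k-\rho\ell)$, and since $y(s-\rho\ell)\ge y(k-\rho\ell)$ for every index $s$ of the window,
\begin{equation*}
y(k+\ell)\le y(k-\rho\ell)\Big(1-\sum_{r=0}^{\frac{k-(k-\rho\ell)-j}{\ell}}p(k-\rho\ell+j+r\ell)\Big).
\end{equation*}
If the $\limsup$ of the window sum exceeds $1$, the bracket is negative along a subsequence, contradicting $y(k+\ell)>0$; this case is short.

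The $\liminf$ criterion is the substantive one, and where I expect the main difficulty. Restricting to a fixed chain $\mathbb{N}_{\ell}(j)$ and writing $Y(r)=y(j+r\ell)$, $P(r)=p(j+r\ell)$, the operator $\Delta_{\ell}$ becomes the ordinary forward difference in the index $r$, so $Y$ is an eventually positive, non-increasing solution of an ordinary first-order delay difference inequality whose window sums are exactly those appearing in the hypothesis. The argument then runs through the ratio sequence $w(r)=Y(r-1)/Y(r)\ge 1$: dividing the inequality by $Y(r)$ and iterating the monotonicity over the window yields a nonlinear relation of the shape $w(r)\ge 1+P(r)\prod_{i}w(r-i)$, and passing to $\liminf$ together with the arithmetic--geometric mean inequality (equivalently, minimizing the product for a prescribed value of the sum) forces an upper bound on the lower limit of the window sum, namely the threshold stated in the lemma; exceeding it produces the contradiction. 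The genuine obstacle is that one controls the window sum only through its $\liminf$, not term by term, so the estimates must be carried out along suitably chosen subsequences and kept uniform in $k$ --- this bookkeeping, rather than any single inequality, is the heart of the cited result \cite{goyri01}.
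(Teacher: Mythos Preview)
The paper does not supply a proof of this lemma; it is quoted from \cite{goyri01} and used as a black box in Theorems~\ref{thm2}--\ref{thm5}. So there is no in-paper argument to compare against.

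Your proposal is therefore an independent attempt. The $\limsup$ part is fine: once $y$ is eventually positive the inequality forces $\Delta_{\ell}y(k)\le 0$, hence $y$ is non-increasing along the chain, and telescoping $\Delta_{\ell}y$ over the window $\{k-\rho\ell,\dots,k\}$ with the bound $y(s-\rho\ell)\ge y(k-\rho\ell)$ yields exactly the displayed inequality; a window sum exceeding $1$ along a subsequence makes the right side negative, contradicting positivity.

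For the $\liminf$ part your reduction is the right one: on a fixed chain $\mathbb{N}_{\ell}(j)$ the problem collapses to the classical first-order delay difference inequality $\Delta Y(r)+P(r)Y(r-\rho)\le 0$, and the ratio-sequence argument you describe is precisely the Erbe--Zhang/Ladas mechanism underlying the cited result. What you have written, however, is a plan rather than a proof: the passage from $w(r)\ge 1+P(r)\prod_{i}w(r-i)$ to the sharp constant requires a careful iterated-product estimate (or the equivalent minimization), and the subsequence bookkeeping you flag is not actually carried out. Since the paper itself is content to cite \cite{goyri01}, that level of detail is consistent with how the lemma is used; but if you intend this as a self-contained proof, the $\liminf$ case still needs to be written out.

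One point to watch in the reduction: after passing to the integer index $r$ the delay becomes $\rho$ \emph{steps}, so the classical threshold one obtains is $\bigl(\tfrac{\rho}{\rho+1}\bigr)^{\rho+1}$. You should check that your argument actually produces the constant $\bigl(\tfrac{\rho\ell}{\rho\ell+1}\bigr)^{\rho\ell+1}$ stated in the lemma, or else reconcile the two.
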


\section{Oscillatory Results}

In this section, we introduce a few sufficient conditions for oscillatory solutions for eq. $(\ref{2e})$.  Throughout this study we consider 
\begin{equation*}
	P(k)=\min\{q(k), q(\tau(k))\}, ~~ Q(k) = LP(k), ~~\text{and}~~ \delta(k) = \sum\limits_{r=0}^{\infty}\frac{1}{a(k+j+r\ell)}. 
\end{equation*}
\begin{thm}\label{thm1}
Let  $\delta(k)=\infty$. 
If
\begin{equation}\label{eq3.1}
	\sum\limits_{r=0}^{\infty}P(k+j+r\ell)=\infty,
\end{equation}
then every solution $\{x(k)\}$ of eq. $(\ref{2e})$ is  oscillatory.
\end{thm}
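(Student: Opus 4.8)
Suppose, for contradiction, that (\ref{2e}) has a nonoscillatory solution $\{x(k)\}$; replacing $x$ by $-x$ if necessary (and using the sign condition on $f$), we may assume $x(k)>0$, $x(\tau(k))>0$ and $x(k-\rho\ell)>0$ for all large $k$, so that $z(k)=x(k)+p(k)x(\tau(k))>0$. Since $\delta(k)=\infty$, Lemma \ref{lem3} applies and furnishes $k_1\ge k_0$ with $z(k)>0$, $\Delta_\ell z(k)>0$, $\Delta_\ell^{m-1}z(k)>0$ and $\Delta_\ell^{m}z(k)\le 0$ for all $k\ge k_1$. In particular $z$ is positive and nondecreasing on $[k_1,\infty)$, hence $z(k)\ge z(k_1)>0$ there, while $a(k)\Delta_\ell^{m-1}z(k)>0$ and, by (\ref{eq2.4}), is nonincreasing.

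The heart of the argument is to show that $f(x(k-\rho\ell))$ is bounded below by a positive constant for all large $k$. From the definition of $z$ we write $x(k)=z(k)-p(k)x(\tau(k))$; using $x(\tau(k))\le z(\tau(k))$ (which holds since $z$ is positive and $p\ge 0$) together with $z(\tau(k))\le z(k)$ (the retarded argument and the monotonicity of $z$) and the standing restriction on $p(k)$ (an upper bound strictly below $1$), one gets $x(k)\ge \beta\, z(k)$ for some constant $\beta>0$ and all large $k$. Consequently $x(k-\rho\ell)\ge \beta\, z(k-\rho\ell)\ge \beta\, z(k_1)>0$ for $k$ large, and the growth/monotonicity hypotheses on $f$ (where the constant $L$ behind $Q=LP$ enters) yield $f(x(k-\rho\ell))\ge d$ for some $d>0$ and all large $k$. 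Since $P(k)=\min\{q(k),q(\tau(k))\}\le q(k)$, it follows from (\ref{2e}) that
\[
\Delta_\ell\left(a(k)\Delta_\ell^{m-1}z(k)\right)=-q(k)f(x(k-\rho\ell))\le -d\,P(k)\qquad (k\ge k_1).
\]

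Now I would telescope this inequality by Theorem \ref{sum}, applied with $v(k)=a(k)\Delta_\ell^{m-1}z(k)$, to obtain
\[
a(k)\Delta_\ell^{m-1}z(k)-a(k_1+j)\Delta_\ell^{m-1}z(k_1+j)\;\le\;-d\sum_{r=0}^{\left[\frac{k-k_1-j-\ell}{\ell}\right]}P(k_1+j+r\ell).
\]
Letting $k\to\infty$ and invoking (\ref{eq3.1}) forces $a(k)\Delta_\ell^{m-1}z(k)\to-\infty$, which contradicts $a(k)\Delta_\ell^{m-1}z(k)>0$. Hence (\ref{2e}) admits no nonoscillatory solution, i.e. every solution is oscillatory.

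I expect the delicate step to be the passage from the sign/monotonicity pattern for $z$ to a \emph{uniform} positive lower bound for $f(x(k-\rho\ell))$: this is where the precise hypotheses on $p(k)$ (the bound strictly below one), on the neutral argument $\tau(k)$ (that it be retarded, so $z(\tau(k))\le z(k)$), and on the nonlinearity $f$ (monotonicity together with a linear/superlinear lower bound) must be combined correctly, and where the constant $L$ enters the estimate. Everything after that is a routine telescoping via Theorem \ref{sum}, and the only bookkeeping subtlety is matching the residue class $j$ used in (\ref{eq3.1}) with the one generated by $k_1$ in the summation identity.
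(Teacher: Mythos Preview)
Your argument hinges on the estimate $x(k)\ge\beta\,z(k)$, which you obtain from $x(\tau(k))\le z(\tau(k))\le z(k)$ together with an upper bound for $p(k)$ strictly below $1$. Neither of these ingredients is available under the hypotheses of the theorem. The paper's own Remark~\ref{thm1}+2 (Remark~3.3) states explicitly that \emph{no} condition on $\tau$ is imposed in Theorem~\ref{thm1} --- $\tau$ may be advanced --- and Example~3.2 takes $p(k)=k-1$, which is unbounded. Under those data your inequality $z(\tau(k))\le z(k)$ need not hold (if $\tau(k)>k$ and $z$ is increasing, the inequality reverses), and with $p(k)\to\infty$ you cannot conclude $x(k)\ge\beta z(k)$ for any fixed $\beta>0$. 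So the ``delicate step'' you flag is not just delicate: it fails in the generality claimed.

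The paper circumvents this entirely by a different device, which is precisely why $P(k)=\min\{q(k),q(\tau(k))\}$ appears in the hypothesis. Instead of estimating $x$ from below by $z$, the proof writes the inequality $\Delta_\ell\bigl(a(k)\Delta_\ell^{m-1}z(k)\bigr)+Lq(k)x(k-\rho\ell)\le 0$ at $k$ \emph{and} at $\tau(k)$, multiplies the latter by $p$, and adds. Using $q(k),q(\tau(k))\ge P(k)$ and $x(k-\rho\ell)+p\,x(\tau(k-\rho\ell))=z(k-\rho\ell)$, one obtains
\[
\Delta_\ell\bigl(a(k)\Delta_\ell^{m-1}z(k)\bigr)+p\,\Delta_\ell\bigl(a(\tau(k))\Delta_\ell^{m-1}z(\tau(k))\bigr)+LP(k)\,z(k-\rho\ell)\le 0,
\]
an inequality already in terms of $z$ alone. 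Summing and using that $z(k-\rho\ell)\ge c>0$ (from $\Delta_\ell z>0$) together with the positivity and monotonicity of $a\Delta_\ell^{m-1}z$ yields $\sum P(k_1+j+r\ell)<\infty$, contradicting~(\ref{eq3.1}). This combination trick is the key idea you are missing; it makes the restrictions on $p$ and on the direction of $\tau$ unnecessary.
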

\begin{proof}
Let $\{x(k)\}$ be a non-oscillatory solution for eq. $(\ref{2e})$. We may assume without loss of generality that $\{x(k)\}$ is a positive solution of equation $(\ref{2e})$. Then $\exists$ a $k_1\geq k_0$ such that $x(k)>0$, $x(\tau(k))$ and $x(k-\rho\ell)>0$ for all $k\geq k_1$. Then from Lemma \ref{lem3}, we have $z(k)>0$, $\Delta_{\ell}z(k)>0$, $\Delta_{\ell}^{m-1}z(k)>0$ and $\Delta_{\ell}^{m}z(k)\leq0$ for all $k\geq k_1$.\\
\indent\hspace{1cm} Now, using $(c_4)$ in equation $(\ref{2e})$, we see that
\begin{equation}
	\Delta_{\ell}\left(a(k)\Delta_{\ell}^{m-1}z(k)\right)=-q(k)f(x(k-\rho\ell))\leq -Lq(k)x({k-\rho\ell})<0~~\forall ~~k\geq k_1.
\end{equation}
Therefore $a(k)\Delta_{\ell}^{m-1}z(k)$ is decreasing. Also from the last inequality, we have
\begin{align}
&\Delta_{\ell}\left(a(k)\Delta_{\ell}^{m-1}z(k)\right)+Lq(k)x(k-\rho\ell)+p\Delta_{\ell}\left(a(\tau(k))\Delta_{\ell}^{m-1}z(\tau(k))\right)\nonumber\\
	&\hspace{3.5cm}+Lq(\tau(k))px(\tau(k-\rho\ell)\leq 0,~~\forall~~ k\geq k_1.	
\end{align}
That is 
\begin{equation}\label{eq3.3}
\Delta_{\ell}\left(a(k)\Delta_{\ell}^{m-1}z(k)\right)	+LP(k)z(k-\rho\ell)+p\Delta_{\ell}\left(a(\tau(k))\Delta_{\ell}^{m-1}z(\tau(k))\right)\leq 0.
\end{equation}
Now by summing up from $k_1$ to $k-\ell$, we obtain
\begin{align}
&a(k)\Delta_{\ell}^{m-1}z(k)-a(k_1)\Delta_{\ell}^{m-1}z(k_1)+L\sum\limits_{r=0}^{\frac{k-k_1-j-\ell}{\ell}} P(k_1+j+r\ell)z(k_1+j+r\ell-\rho\ell)\nonumber \\
	&\hspace{1cm}
	+pa(\tau(k))\Delta_{\ell}^{m-1}z(\tau(k))-pa(\tau(k_1))\Delta_{\ell}^{m-1}z(\tau(k_1))
	\leq 0~~\forall~~ k\geq k_1.	
\end{align}
That is 
\begin{align}\label{eq3.4}
&L\sum\limits_{r=0}^{\frac{k-k_1-j-\ell}{\ell}} P(k_1+j+r\ell)z(k_1+j+r\ell-\rho\ell)\leq
 a(k_1)\Delta_{\ell}^{m-1}z(k_1)-a(k)\Delta_{\ell}^{m-1}z(k) 
	\nonumber\\
	&\hspace{1cm}-pa(\tau(k))\Delta_{\ell}^{m-1}z(\tau(k))+pa(\tau(k_1)\Delta_{\ell}^{m-1}z(\tau(k_1)
	\leq 0~~\text{for all}~~ k\geq k_1.	
\end{align}
Since $\Delta_{\ell}z(k)>0$ and $z(k)>0$ there $\exists$ a constant $c\geq 0$ such that $z(k-\rho\ell)\geq c$ for all $k\geq k_1$ and using the monotonicity of $a(k)\Delta_{\ell}z(k)$ in the last inequity and letting $k\to\infty$, we get
\begin{equation}\label{eq3.5}
	L\sum\limits_{r=0}^{\frac{k-k_1-j-\ell}{\ell}} P(k_1+j+r\ell)z(k_1+j+r\ell-\rho\ell)<\infty,
\end{equation}
which leads to contradiction with $(\ref{eq3.1})$. Thus the proof is complete.
\end{proof}
\begin{exm}
Consider 
\begin{align}\label{eeq1.1}
&\Delta_{\ell}\left(k\Delta_{\ell}^{m-1}(x(k)+(k-1)(x(k+4\ell)))\right)\nonumber\\
&\hspace{2cm}+2^{m-2}\left[4k^2+2\ell(m+\ell) k+(m+1)\ell^2\right]x(k-3\ell)=0,~~k\geq3\ell
\end{align}
where $m\geq2$ is an even integer. Here $p(k)=k-1>0$, $a(k)=k$, $\rho=3$ and $q(k)=2^{m-2}\left[4k^2+2\ell(m+\ell) k+(m+1)\ell^2\right]$. 

Thus all conditions in the Theorem \ref{thm1} are satisfied and is oscillatory. Indeed $\{x(k)\}=\{(-1)^{\left\lceil\frac{k}{\ell}\right\rceil}\}$ is the one oscillatory solution for eq. (\ref{eeq1.1})
\end{exm}
\begin{rem}
In Theorem \ref{thm1} no conditions were imposed on the sequence $\{\tau(k)\}$. That is, $\tau(k)$ could be of delay or advanced type. 
\end{rem}
\begin{thm}\label{thm2}
Consider  $\delta(k)=\infty$ 
and $\tau(k)=k+\tau$. If either 
\begin{equation}\label{eq3.6}
\liminf_{k\to\infty}\sum\limits_{r=0}^{\frac{\rho\ell-j-\ell}{\ell}}\frac{(k-2\rho\ell+j+r\ell)^{m-1}Q((k-\rho\ell)+j+r\ell)}{a(k+j+r\ell-2\rho\ell)}\geq \frac{\beta}{\lambda}\left(\frac{\rho\ell}{1+\rho\ell}\right)^{\rho\ell+1}
\end{equation}
or
\begin{equation}\label{eq3.7}
\limsup_{k\to\infty}\sum\limits_{r=0}^{\frac{\rho\ell-j}{\ell}}\frac{(k+j+r\ell-2\rho\ell)^{m-1}Q(k+j+r\ell-\rho\ell)}{a(k+j+r\ell-2\rho\ell)}\geq \frac{\beta}{\lambda},
\end{equation}
where $\lambda\in(0,1)$ and $\beta=(1+p)(m-1)!\ell^{m-1}$, then the solution $\{x(k)\}$ for eq. (\ref{2e}) is oscillatory.
\end{thm}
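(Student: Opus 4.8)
The plan is to run a contradiction argument in the spirit of the proof of Theorem~\ref{thm1}, but to compress all the information into a first order generalized difference inequality and then invoke Lemma~\ref{lem4}. Suppose $\{x(k)\}$ is a non-oscillatory solution of \eqref{2e}; without loss of generality it is eventually positive, so for some $k_1\ge k_0$ we have $x(k)>0$, $x(\tau(k))>0$ and $x(k-\rho\ell)>0$ for $k\ge k_1$. Since $\delta(k)=\infty$, Lemma~\ref{lem3} gives $z(k)>0$, $\Delta_\ell z(k)>0$, $\Delta_\ell^{m-1}z(k)>0$ and $\Delta_\ell^{m}z(k)\le 0$ for $k\ge k_1$; moreover, as $a(k)$ is positive and increasing while $a(k)\Delta_\ell^{m-1}z(k)$ is decreasing, $\Delta_\ell^{m-1}z(k)$ is itself positive and decreasing.

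First I would redo the neutral combination used to obtain \eqref{eq3.3}: apply condition $(c_4)$ on $f$ to \eqref{2e} at $k$ and at $\tau(k)=k+\tau$, scale the second copy by $p$, add the two, and use $z(k-\rho\ell)=x(k-\rho\ell)+p\,x(\tau(k)-\rho\ell)$ together with $q(k)u+p\,q(\tau(k))v\ge P(k)(u+pv)$ to get
\[
\Delta_\ell w(k)+Q(k)\,z(k-\rho\ell)\le 0,\qquad w(k):=a(k)\Delta_\ell^{m-1}z(k)+p\,a(\tau(k))\Delta_\ell^{m-1}z(\tau(k)),
\]
for all $k\ge k_1$ (here I use that $\Delta_\ell$ commutes with the shift $k\mapsto k+\tau$). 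Thus $w(k)>0$ and $\Delta_\ell w(k)<0$. Next I would turn $z(k-\rho\ell)$ into a multiple of $w(k-\rho\ell)$: Lemma~\ref{lem2} (or, more precisely, the iterated-summation estimate behind it, valid since $\Delta_\ell^{m}z\le 0$) yields $z(k-\rho\ell)\ge \frac{(k-\rho\ell-k_1)^{(m-1)}}{(m-1)!\,\ell^{m-1}}\,\Delta_\ell^{m-1}z(k-\rho\ell)$ for all large $k$, while $\tau(k)\ge k$ and the monotonicity of $a(k)\Delta_\ell^{m-1}z(k)$ give $w(k-\rho\ell)\le (1+p)\,a(k-\rho\ell)\,\Delta_\ell^{m-1}z(k-\rho\ell)$, i.e. $\Delta_\ell^{m-1}z(k-\rho\ell)\ge w(k-\rho\ell)/\big((1+p)a(k-\rho\ell)\big)$. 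Combining these with $\beta=(1+p)(m-1)!\ell^{m-1}$ and bounding the falling factorial below by $\lambda(k-\rho\ell)^{m-1}$ for large $k$ produces
\[
\Delta_\ell w(k)+\frac{\lambda\,(k-\rho\ell)^{m-1}\,Q(k)}{\beta\,a(k-\rho\ell)}\,w(k-\rho\ell)\le 0\qquad\text{for all large }k .
\]

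Finally I would apply Lemma~\ref{lem4} with $y=w$ and $p(k)=\lambda(k-\rho\ell)^{m-1}Q(k)/\big(\beta\,a(k-\rho\ell)\big)$; evaluating this coefficient at $k-\rho\ell+j+r\ell$ and summing converts its two alternatives into exactly the sums in \eqref{eq3.6} and \eqref{eq3.7} (the ranges $\frac{\rho\ell-j-\ell}{\ell}$ and $\frac{\rho\ell-j}{\ell}$ already match those of Lemma~\ref{lem4}), so one of the hypotheses forces either $\liminf_{k\to\infty}\sum_r p(k-\rho\ell+j+r\ell)>(\rho\ell/(1+\rho\ell))^{\rho\ell+1}$ or $\limsup_{k\to\infty}\sum_r p(k-\rho\ell+j+r\ell)>1$; Lemma~\ref{lem4} then says the displayed inequality has no eventually positive solution, contradicting $w(k)>0$. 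Hence \eqref{2e} has no non-oscillatory solution. The routine pieces are the neutral combination (already carried out in Theorem~\ref{thm1}) and the index bookkeeping against Lemma~\ref{lem4}; the delicate step is the construction of $w$ in the middle paragraph — it must be positive, decreasing, and \emph{dominate} a fixed multiple of $a(k)\Delta_\ell^{m-1}z(k)$, which is precisely where $\tau(k)=k+\tau$ (with $\tau\ge 0$) is used — together with pinning the Kneser-type lower bound for $z(k-\rho\ell)$ to carry exactly the constant $(m-1)!\ell^{m-1}$, so that the coefficient of $w(k-\rho\ell)$ aligns with $\beta$ and the left sides of \eqref{eq3.6}/\eqref{eq3.7}; the parameter $\lambda\in(0,1)$ serves to absorb the passage from the falling factorial $(k-\rho\ell-k_1)^{(m-1)}$ to the power $(k-\rho\ell)^{m-1}$ and to upgrade the hypotheses' ``$\ge$'' to the strict inequality demanded by Lemma~\ref{lem4}.
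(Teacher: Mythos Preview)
Your argument is correct and is essentially the paper's own proof: both derive the neutral inequality \eqref{eq3.3}, apply the Kneser-type estimate of Lemma~\ref{lem2} to replace $z(k-\rho\ell)$ by a multiple of $\Delta_\ell^{m-1}z(k-\rho\ell)$, set $w(k)=a(k)\Delta_\ell^{m-1}z(k)+p\,a(\tau(k))\Delta_\ell^{m-1}z(\tau(k))$, use $\tau(k)=k+\tau\ge k$ together with the decrease of $a(k)\Delta_\ell^{m-1}z(k)$ to get $w\le(1+p)a\Delta_\ell^{m-1}z$, and finish with Lemma~\ref{lem4}. The only cosmetic difference is that the paper invokes the second inequality of Lemma~\ref{lem2} directly (so $\lambda=(1/2^{m-2})^{m-1}$ appears explicitly), whereas you use the first inequality and let $\lambda$ absorb the passage from $(k-\rho\ell-k_1)^{(m-1)}$ to $(k-\rho\ell)^{m-1}$; the resulting first-order inequality \eqref{eq3.12} and the appeal to Lemma~\ref{lem4} are identical.
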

\begin{proof}
Now asumme $\{x(k)\}$ is a non-oscillatory solution for equation (\ref{2e}). We can consider without loss of generality that there $\exists$ $k_1\geq k_0$  such that $x(k)>0$, $x(\tau(k))>0$ and $x(k-\rho\ell)>0$ for all $k\geq k_1$. Now proceeding as in the previous theorem, we obtain (\ref{eq3.3}). That is,
\begin{equation}\label{eq3.8}
\Delta_{\ell}\left(a(k)\Delta_{\ell}^{m-1}z(k)\right)	+LP(k)z(k-\rho\ell)+p\Delta_{\ell}\left(a(\tau(k))\Delta_{\ell}^{m-1}z(\tau(k))\right)\leq 0.
\end{equation}
Now, since $\Delta_{\ell}^{m-1}z(k)>0$, $\Delta_{\ell}^{m}z(k)\leq0$, using Lemma \ref{lem2} there $\exists$ $k_2\geq k_1$ such that 
\begin{align}\label{eq3.9}
&\Delta_{\ell}\left(a(k)\Delta_{\ell}^{m-1}z(k)\right)	+Q(k)\frac{1}{(m-1)!\ell^{m-1}}\left(\frac{k-\rho\ell}{2^{m-2}}\right)^{m-1}\Delta_{\ell}^{m-1}z(k-\rho\ell)\nonumber\\
&\hspace{3cm}+p\Delta_{\ell}\left(a(\tau(k))\Delta_{\ell}^{m-1}z(\tau(k))\right)\leq 0. ~~\text{for all}~~ k\geq k_2\geq 2^{m-2}.
\end{align}
Put $u(k)=a(k)\Delta_{\ell}^{m-1}z(k)$. Then $u(k)>0$ and $\Delta_{\ell}u(k)\leq 0$ and the last inequality becomes
\begin{equation}\label{eq3.10}
\Delta_{\ell}\left(u(k)+pu(\tau(k))\right)	+\frac{\lambda Q(k)}{(m-1)!\ell^{m-1}}\left(\frac{(k-\rho\ell)^{m-1}}{a(k-\rho\ell)}\right)u(k-\rho\ell)\leq 0, 
\end{equation}
for all $k\geq k_2$, for every $\lambda$, where $0<\lambda=\left(\dfrac{1}{2^{m-2}}\right)^{m-1}<1$.\\
Now set $w(k)=u(k)+pu(\tau(k))$. Then $w(k)>0$ and since $u(k)$ is decreasing and having $\tau(k)=k+\tau\geq k$, thus we have 
\begin{equation}\label{eq3.11}
w(k)\leq (1+p)u(k).
\end{equation}
Using (\ref{eq3.11}) in (\ref{eq3.10}), we notice that $w(k)$ is a positive solution of 
\begin{equation}\label{eq3.12}
\Delta_{\ell}w(k)	+\frac{\lambda Q(k)}{(m-1)!\ell^{m-1}}\left(\frac{(k-\rho\ell)^{m-1}}{(1+p)a(n-\rho\ell)}\right)w(k-\rho\ell)\leq 0, ~~\text{for all}~~ k\geq k_2.
\end{equation}
Now there are two possibilities either (\ref{eq3.6}) or (\ref{eq3.7}) holds.\\

\textbf{Case(i).} If (\ref{eq3.6}) holds, then by using the Lemma \ref{lem4} we obtain the inequality (\ref{eq3.12}) which has no positive solution, and that is again a contradictory.\\

\textbf{Case(ii).} If the condition (\ref{eq3.7}) holds, by Lemma \ref{lem4} we confirm that the inequality (\ref{eq3.12}) has no positive solution, which inturn is also a contradiction.\\ 
Thus the proof is now completed.
\end{proof}
\begin{exm}
Consider the equation
\begin{align}\label{eeq1.2}
\Delta_{\ell}\left(k\Delta_{\ell}^{m-1}(x(k)+(x(k+2\ell)))\right)+2^{m}(2k+\ell)x(k-3\ell)=0,~~k\geq3\ell
\end{align}
where $m\geq4$ is an even integer and here $p(k)=1>0$, $a(k)=k$, $q(k)=2^{m}(2k+\ell)$, $\tau(k)=k+2\ell$ and $\rho=3$.\\

It is not so difficult to see that all conditions in Theorem \ref{thm2} are satisfied and thus every solution of equation (\ref{eeq1.2}) is oscillatory. Indeed $\{x(k)\}=\{(-1)^{\left\lceil\frac{k}{\ell}\right\rceil}\}$ is a one of such oscillatory solution of equation (\ref{eeq1.2})
\end{exm}

\begin{thm}\label{thm3}
Assume that $\delta(k)=\infty$
and $k-\rho\ell\leq \tau(k)\leq k$. If either 
\begin{equation}\label{eq3.13}
\liminf_{k\to\infty}\sum\limits_{r=0}^{\frac{\rho\ell-j-\ell}{\ell}}\frac{(k+j+r\ell-2\rho\ell)^{m-1}Q(k+j+r\ell-\rho\ell)}{a(k+j+r\ell-2\rho\ell)}< \beta\left(\frac{\rho\ell}{1+\rho\ell}\right)^{\rho\ell+1}
\end{equation}
or
\begin{equation}\label{eq3.14}
\limsup_{k\to\infty}\sum\limits_{r=0}^{\frac{\rho\ell-j}{\ell}}\frac{(k+j+r\ell-2\rho\ell)^{m-1}Q(k+j+r\ell-\rho\ell)}{a(k+j+r\ell-2\rho\ell)}<\beta ,
\end{equation}
where $\beta=(1+p)(m-1)!\ell^{m-1}$, then every solution $\{x(k)\}$ of equation (\ref{2e}) is oscillatory.
\end{thm}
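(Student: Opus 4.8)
The plan is to follow the scheme of Theorems \ref{thm1} and \ref{thm2}, with the delay condition $k-\rho\ell\le\tau(k)\le k$ playing the role of the advanced condition used in Theorem \ref{thm2}. First I would assume, toward a contradiction, that $(\ref{2e})$ possesses a non-oscillatory solution, which we may take to be a solution $\{x(k)\}$ with $x(k)>0$, $x(\tau(k))>0$ and $x(k-\rho\ell)>0$ for all $k\ge k_1\ge k_0$. Since $\delta(k)=\infty$, Lemma \ref{lem3} gives $z(k)>0$, $\Delta_\ell z(k)>0$, $\Delta_\ell^{m-1}z(k)>0$ and $\Delta_\ell^{m}z(k)\le 0$ for all large $k$. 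Using $(c_4)$, the definitions $P(k)=\min\{q(k),q(\tau(k))\}$, $Q(k)=LP(k)$, and adding $(\ref{2e})$ at $k$ to $p$ times $(\ref{2e})$ at $\tau(k)$, one reaches $(\ref{eq3.3})$; then inserting the lower bound of Lemma \ref{lem2} for $z(k-\rho\ell)$ and writing $u(k):=a(k)\Delta_\ell^{m-1}z(k)$ yields, exactly as in the passage $(\ref{eq3.8})\to(\ref{eq3.10})$, the first-order neutral inequality
\begin{equation*}
\Delta_\ell\bigl(u(k)+p\,u(\tau(k))\bigr)+\frac{\lambda Q(k)}{(m-1)!\,\ell^{m-1}}\cdot\frac{(k-\rho\ell)^{m-1}}{a(k-\rho\ell)}\,u(k-\rho\ell)\le 0
\end{equation*}
for all large $k$, where $\lambda=\bigl(2^{-(m-2)}\bigr)^{m-1}\in(0,1)$, $u(k)>0$ and $\Delta_\ell u(k)\le 0$.

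The only genuinely new ingredient is the treatment of the neutral term $w(k):=u(k)+p\,u(\tau(k))$. Here $w(k)>0$, and $\Delta_\ell w(k)=\Delta_\ell u(k)+p\,\Delta_\ell u(\tau(k))\le 0$ by the monotonicity of $u$, so $w$ is positive and non-increasing. Since $u$ is non-increasing and $k-\rho\ell\le\tau(k)\le k$, we get $u(k)\le u(\tau(k))\le u(k-\rho\ell)$, hence $w(k)\le(1+p)\,u(k-\rho\ell)$. Feeding this estimate (after a shift of the argument, using that $w$ too is non-increasing) into the displayed inequality and putting $\beta=(1+p)(m-1)!\,\ell^{m-1}$, one finds that $w$ is an eventually positive solution of a pure-delay first-order generalized difference inequality of the form
\begin{equation*}
\Delta_\ell w(k)+\frac{\lambda}{\beta}\cdot\frac{(k-\rho\ell)^{m-1}Q(k)}{a(k-\rho\ell)}\,w(k-\rho\ell)\le 0 .
\end{equation*}

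The final step is to apply Lemma \ref{lem4} to this inequality and contradict $w(k)>0$: evaluating its coefficient at $k-\rho\ell+j+r\ell$ and summing over $0\le r\le\frac{\rho\ell-j-\ell}{\ell}$, respectively $0\le r\le\frac{\rho\ell-j}{\ell}$, reproduces, up to the constant $\lambda/\beta$, exactly the sums appearing on the left of $(\ref{eq3.13})$ and $(\ref{eq3.14})$, so that hypotheses $(\ref{eq3.13})$ and $(\ref{eq3.14})$ are precisely the ones feeding Lemma \ref{lem4}. I expect the main obstacle to be this reduction of the neutral inequality to a pure-delay one in the delay regime: in Theorem \ref{thm2} the advanced shift $\tau(k)\ge k$ produced the clean bound $w(k)\le(1+p)u(k)$, whereas here $u(\tau(k))\ge u(k)$ runs the wrong way, so one must use both halves of $k-\rho\ell\le\tau(k)\le k$ together with the monotonicity of $u$ and of $w$; moreover one must carry the constants $\lambda$ (from Lemma \ref{lem2}) and $\beta$ consistently through the estimates so that the effective threshold and the shifted summation ranges match those in $(\ref{eq3.13})$ and $(\ref{eq3.14})$ verbatim.
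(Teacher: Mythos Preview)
Your setup through the neutral first-order inequality for $u(k)=a(k)\Delta_\ell^{m-1}z(k)$ matches the paper exactly. The divergence, and the gap, is at the step you yourself flag as the main obstacle: passing from $w(k)=u(k)+pu(\tau(k))$ to a pure delay inequality in $w$. From $\tau(k)\le k$ and $u$ decreasing you correctly get $u(\tau(k))\ge u(k)$, and from $\tau(k)\ge k-\rho\ell$ you get $u(\tau(k))\le u(k-\rho\ell)$, whence $w(k)\le(1+p)u(k-\rho\ell)$. But this only yields $u(k-\rho\ell)\ge\frac{1}{1+p}w(k)$, so after substitution the forcing term carries $w(k)$, not $w(k-\rho\ell)$. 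Your proposed ``shift'' using that $w$ is non-increasing goes the wrong way: $w(k-\rho\ell)\ge w(k)$, so you cannot replace $w(k)$ by $w(k-\rho\ell)$ in a lower bound. Equivalently, $w(k-\rho\ell)=u(k-\rho\ell)+pu(\tau(k-\rho\ell))\ge(1+p)u(k-\rho\ell)$ since $\tau(k-\rho\ell)\le k-\rho\ell$, which is again the wrong inequality. So the displayed delay inequality for $w$ does not follow from your estimates.

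The paper's device is to use only the half $\tau(k)\le k$ to write $w(k)\le(1+p)u(\tau(k))$ (this is the correct reading of their (\ref{eq3.16}), despite the typo), and then substitute $k\mapsto\tau^{-1}(k-\rho\ell)$ to obtain $u(k-\rho\ell)\ge\frac{1}{1+p}w(\tau^{-1}(k-\rho\ell))$. This produces
\[
\Delta_\ell w(k)+\frac{\lambda Q(k)}{(1+p)(m-1)!\,\ell^{m-1}}\cdot\frac{(k-\rho\ell)^{m-1}}{a(k-\rho\ell)}\,w\!\bigl(\tau^{-1}(k-\rho\ell)\bigr)\le 0,
\]
i.e.\ (\ref{eq3.17}), a genuine delay inequality because $\tau^{-1}(k-\rho\ell)\le k$ (here the other half $\tau(k)\ge k-\rho\ell$ guarantees $\tau^{-1}(k-\rho\ell)\le k$). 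Lemma \ref{lem4} is then applied to this inequality. So the missing idea in your plan is the introduction of $\tau^{-1}$; without it the reduction to Lemma \ref{lem4} does not go through.
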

\begin{proof}
Similiar to the previous proof, we consider $\{x(k)\}$ is a non-oscillatory solution of equation (\ref{2e}). Then assume $\{x(k)\}$ is a positive solution of equation (\ref{2e}). It follows that there is an integer $k_1\geq k_0$  such that $x(k)>0$, $x(\tau(k))>0$ and $x(k-\rho\ell)>0$ for all $k\geq k_1$. Now proceeding as in the previous theorem, we obtain 
\begin{equation}\label{eq3.15}
\Delta_{\ell}\left(u(k)+pu(\tau(k))\right)	+\frac{\lambda Q(k)}{(m-1)!\ell^{m-1}}\left(\frac{(k-\rho\ell)^{m-1}}{a(k-\rho\ell)}\right)u(k-\rho\ell)\leq 0,~~\forall~~ k\geq k_1. 
\end{equation}
Put $w(k)=u(k)+pu(\tau(k))$. Then $w(k)>0$. Since $u(k)$ is decreasing, we have 
\begin{equation}\label{eq3.16}
w(k)=u(k)+pu(\tau(k))\leq (1+p)u(k)~~\text{for}~~\tau(k)\leq k.
\end{equation}
 using (\ref{eq3.16}) in (\ref{eq3.15}), we get
\begin{equation}\label{eq3.17}
\Delta_{\ell}w(k)	+\frac{\lambda Q(k)}{(m-1)!\ell^{m-1}}\left(\frac{(k-\rho\ell)^{m-1}}{(1+p)a(n-\rho\ell)}\right)w(\tau^{-1}(k-\rho\ell))\leq 0, ~~\forall~~ k\geq k_1.
\end{equation}
Thus $\{w(k)\}$ is a positive solution and satisfies the inequality (\ref{eq3.17}). Similarly, we have two cases as follows:

\textbf{Case(i).} If (\ref{eq3.13}) holds, then by using Lemma \ref{lem4} we obtain the inequality (\ref{eq3.17}) which  has no positive solution, it is also a contradiction.\\

\textbf{Case(ii).} If the condition (\ref{eq3.14}) holds, thus Lemma \ref{lem4} confirms that the inequality (\ref{eq3.17}) has no positive solution, is again a contradictory. \\
These complete the proof.
\end{proof}
\begin{exm}
Consider the equation
\begin{align}\label{eeq1.3}
&\Delta_{\ell}\left(k\Delta_{\ell}^{m-1}(x(k)+(k+1)(x(k-\ell)))\right)\nonumber\\
&\hspace{2cm}+2^{m-2}\left[4k^2+(2\ell +2m\ell) k+(m+1)\ell^2\right]x(k-3\ell)=0,~~k\geq3\ell
\end{align}
where $m>3$ is an odd integer. Here $p(k)=k+1>0$, $a(k)=k$, $\rho=3$ and $q(k)=2^{m-2}\left[4k^2+(2\ell +2m\ell) k+(m+1)\ell^2\right]$, $\tau(k)=k-\ell$. \\

The conditions in Theorem \ref{thm3} are satisfied and thus every solution of equation (\ref{eeq1.3}) is oscillatory. Indeed the solution $\{x(k)\}=\{(-1)^{\left\lceil\frac{k}{\ell}\right\rceil}\}$ is oscillatory for (\ref{eeq1.3}).
\end{exm}
\begin{thm}\label{thm4}
Assume that $\delta(k)<\infty$
 and $k-\rho\ell\leq \tau(k)\leq k$. If either (\ref{eq3.13}) or when $\tau^{-1}(k-\rho\ell)$ is non-decreasing with (\ref{eq3.14}) holds and for sufficiently large $k_1\geq k_0$
\begin{align}\label{eq3.18}
&\limsup_{k\to\infty}\sum_{r=0}^{\frac{k-k_0-j-\ell}{\ell}}\left[\frac{\lambda\delta(k_0+j+r\ell)Q(k_0+j+r\ell)(k_0+j+r\ell-\rho\ell)^{m-2}}{(m-2)!}\right.\nonumber\\
&\hspace{3cm}\left.-\frac{ (1+p)}{4a(k_0+j+r\ell+\ell)\delta(k_0+j+r\ell+\ell)}\right]=\infty,
\end{align}
then every solution $\{x(k)\}$ of equation (\ref{2e}) is oscillatory.
\end{thm}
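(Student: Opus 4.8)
The plan is to argue by contradiction in the spirit of Theorems \ref{thm2} and \ref{thm3}, while keeping track of the extra branch that $\delta(k)<\infty$ makes possible. Suppose $\{x(k)\}$ is a non-oscillatory solution of (\ref{2e}); without loss of generality take it eventually positive, so $z(k)>0$ and, from (\ref{2e}), $\Delta_{\ell}\big(a(k)\Delta_{\ell}^{m-1}z(k)\big)\le -Lq(k)x(k-\rho\ell)<0$ for all large $k$. Hence $u(k):=a(k)\Delta_{\ell}^{m-1}z(k)$ is eventually decreasing, so $\Delta_{\ell}^{m-1}z(k)$ is eventually of one sign, and I would split into the two cases $\Delta_{\ell}^{m-1}z(k)>0$ and $\Delta_{\ell}^{m-1}z(k)<0$.

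\textbf{Case $\Delta_{\ell}^{m-1}z(k)>0$.} This reproduces the situation treated in Theorems \ref{thm2}--\ref{thm3}. I would build the minorant $Q(k)z(k-\rho\ell)$ from $q(k)x(k-\rho\ell)$ and $q(\tau(k))x(\tau(k-\rho\ell))$ to reach (\ref{eq3.8}), use Lemma \ref{lem2} to bound $z(k-\rho\ell)$ below by a multiple of $(k-\rho\ell)^{(m-1)}\Delta_{\ell}^{m-1}z(\cdot)$, put $w(k)=u(k)+pu(\tau(k))$, and use $\tau(k)\le k$ together with the monotonicity of $u$ to get $w(k)\le(1+p)u(k)$. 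This yields a first-order generalized difference inequality of the type (\ref{eq3.17}), and Lemma \ref{lem4} then forbids an eventually positive solution: the liminf hypothesis (\ref{eq3.13}) triggers the first criterion of Lemma \ref{lem4}, and---when $\tau^{-1}(k-\rho\ell)$ is non-decreasing---the limsup hypothesis (\ref{eq3.14}) triggers the second. Either way we obtain the desired contradiction, and this part carries over essentially verbatim from Theorem \ref{thm3}.

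\textbf{Case $\Delta_{\ell}^{m-1}z(k)<0$.} Here $u(k)<0$ and decreasing, and this branch genuinely uses $\delta(k)<\infty$. I would first check that $\Delta_{\ell}^{m-2}z(k)>0$ for all large $k$: otherwise $\Delta_{\ell}^{m-2}z(k)$, being decreasing, would eventually be bounded above by a negative constant, and applying Lemma \ref{le1.8.10} would force $z(k)\to-\infty$, contradicting $z(k)>0$. Since $u$ is decreasing, $\Delta_{\ell}^{m-1}z(s)=u(s)/a(s)\le u(k)/a(s)$ for $s\ge k$; summing over $s$ (via Theorem \ref{sum}) and using $\lim_{s\to\infty}\Delta_{\ell}^{m-2}z(s)\ge 0$ (the limit exists because $\Delta_{\ell}^{m-2}z$ is positive and decreasing) yields $-\Delta_{\ell}^{m-2}z(k)\le u(k)\delta(k)$. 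A summation argument of the same type as in Lemma \ref{lem2}, applied now to $\Delta_{\ell}^{m-2}z$, supplies a lower bound of the form $z(k-\rho\ell)\ge\dfrac{\lambda(k-\rho\ell)^{(m-2)}}{(m-2)!}\Delta_{\ell}^{m-2}z(k-\rho\ell)$ with a suitable constant $\lambda\in(0,1)$. The decisive step is the Riccati substitution
\[
\omega(k)=\frac{\delta(k)\big(u(k)+pu(\tau(k))\big)}{\Delta_{\ell}^{m-2}z(k)}<0 ,
\]
which, by the two bounds just obtained together with $u(\tau(k))\ge u(k)$ (from $\tau(k)\le k$), satisfies $-(1+p)\le\omega(k)<0$. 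Differencing $\omega$ by the product rule of Lemma \ref{lem1.8.4}, using the telescoping identity $\Delta_{\ell}\delta(k)=-1/a(k)$ and the inequality $\Delta_{\ell}\big(u(k)+pu(\tau(k))\big)\le -Q(k)z(k-\rho\ell)$ read off from (\ref{eq3.8}), inserting the lower bound for $z(k-\rho\ell)$, and completing the square in the remaining terms---the shift $k\mapsto k+\ell$ built into $\Delta_{\ell}$ being exactly what puts $a(k+\ell)\delta(k+\ell)$ in the denominator---I would arrive at
\[
\Delta_{\ell}\omega(k)\le -\frac{\lambda\,\delta(k)Q(k)(k-\rho\ell)^{(m-2)}}{(m-2)!}+\frac{1+p}{4\,a(k+\ell)\delta(k+\ell)} .
\]
Summing this from a large $k_{1}$ to $k$ via Theorem \ref{sum} leaves $\omega(k)-\omega(k_{1})$ on the left, which is bounded below since $-(1+p)\le\omega<0$; but the right-hand side is not bounded below along a subsequence, by hypothesis (\ref{eq3.18}). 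This contradiction completes the proof.

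The routine parts are the first case (a rerun of Theorem \ref{thm3}) and the sign claim $\Delta_{\ell}^{m-2}z(k)>0$. The main obstacle is the Riccati estimate in the second case: the substitution must be chosen so that it stays bounded---this is precisely what forces the weight $\delta(k)$ and produces the constant $1+p$---and the neutral term $pu(\tau(k))$ has to be carried through $\Delta_{\ell}\omega$ with every shifted argument kept under control by $\tau(k)\le k$. Moreover the lower bound for $z(k-\rho\ell)$ in terms of $\Delta_{\ell}^{m-2}z$ may require a finer analysis of the intermediate signs $\Delta_{\ell}^{i}z$, $1\le i\le m-3$, when $m$ is large, and the discrete completing-of-the-square with the operator $\Delta_{\ell}$ must be arranged so as to land exactly on the expression appearing in (\ref{eq3.18}).
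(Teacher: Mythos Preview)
Your overall architecture matches the paper's proof exactly: contradiction, split on the sign of $\Delta_{\ell}^{m-1}z(k)$, refer Case~(i) back to Theorem~\ref{thm3}, and in Case~(ii) use a Riccati substitution whose boundedness comes from the weight $\delta(k)$, apply Lemma~\ref{lem2} at order $m-2$ to estimate $z(k-\rho\ell)$, complete the square, and sum to contradict (\ref{eq3.18}).

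The one substantive divergence is your choice of a \emph{single} Riccati function $\omega(k)=\delta(k)\big(u(k)+pu(\tau(k))\big)/\Delta_{\ell}^{m-2}z(k)$. The paper instead introduces \emph{two} unweighted Riccati functions,
\[
w(k)=\frac{a(k)\Delta_{\ell}^{m-1}z(k)}{\Delta_{\ell}^{m-2}z(k)},\qquad
v(k)=\frac{a(\tau(k))\Delta_{\ell}^{m-1}z(\tau(k))}{\Delta_{\ell}^{m-2}z(k)},
\]
derives the inequality
\[
\Delta_{\ell}w(k)+p\,\Delta_{\ell}v(k)\le -\frac{\lambda Q(k)(k-\rho\ell)^{m-2}}{(m-2)!\ell^{m-2}}-\frac{w^{2}(k+\ell)}{a(k+\ell)}-\frac{p\,v^{2}(k+\ell)}{a(k+\ell)},
\]
and only \emph{afterwards} multiplies by $\delta(k)$, sums by parts, and completes the two squares separately to obtain the $(1+p)/\bigl(4a(k+\ell)\delta(k+\ell)\bigr)$ term. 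The reason this organisation matters is exactly the obstacle you flag at the end: when you difference your combined $\omega$, the cross term coming from $\Delta_{\ell}\bigl[1/\Delta_{\ell}^{m-2}z\bigr]$ produces a factor $\Delta_{\ell}^{m-1}z(k)=u(k)/a(k)$, which involves $u$ alone and not the neutral combination $u+pu\circ\tau$. Consequently you do not get a clean $\omega^{2}$ term, and the completing-the-square step does not close to yield precisely the constant $(1+p)/4$ appearing in (\ref{eq3.18}); the relation $u(\tau(k))\ge u(k)$ gives $W\ge(1+p)u$, which goes the wrong way for that estimate. The paper's two Riccatis sidestep this because each one's quotient cross term matches its own numerator, so $w^{2}$ and $v^{2}$ emerge directly. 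Replacing your single $\omega$ by the pair $(w,v)$ and postponing the $\delta$-weighting to the summation stage removes the difficulty you identified.
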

\begin{proof}
Let $\{x(k)\}$ be a non-oscillatory and be a positive solution for equation (\ref{2e}). Then there $\exists$ an integer $k_1\geq k_0$  such that $x(k)>0$, $x(\tau(k))>0$ and $x(k-\rho\ell)>0$ for all $k\geq k_1$.  From equation (\ref{2e}) we see that $\Delta_{\ell}\left(a(k)\Delta_{\ell}^{m-1}z(k)\right)\leq 0$ for all $k\geq k_1$. Since $\{a(k)\}$ is positive, $\Delta_{\ell}^{m-1}z(k)$ is of one sign for all $k\geq k_1$.\\

\textbf{Case(i):} Suppose $\Delta_{\ell}^{m-1}z(k)>0$ eventually, the proof is similar to the Case (i) in Theorem \ref{thm3} and hence we omit the details.\\
\textbf{Case(ii):} Suppose $\Delta_{\ell}^{m-1}z(k)<0$ eventually, then by Lemma \ref{th1.8.11}, we have $\Delta_{\ell}^{m-2}z(k)>0$ and $\Delta_{\ell}z(k)>0$. Now define $w(k)$ by
\begin{equation}\label{eq3.19}
w(k)=\frac{a(k)\Delta_{\ell}^{m-1}z(k)}{\Delta_{\ell}^{m-2}z(k)}~~\text{for all}~~k\geq k_2\geq k_1.
\end{equation}
Then $w(k)<0$ and 
\begin{equation*}
\Delta_{\ell}w(k)=\frac{\Delta_{\ell}\left(a(k)\Delta_{\ell}^{m-1}z(k)\right)}{\Delta_{\ell}^{m-2}z(k)}-\frac{a(k+\ell)\Delta_{\ell}^{m-1}z(k+\ell)}{\Delta_{\ell}^{m-2}z(k+\ell)\Delta_{\ell}^{m-2}z(k)}\Delta_{\ell}^{m-1}z(k)~~\text{for all}~~k\geq k_2.
\end{equation*}
Since $a(k)\Delta_{\ell}^{m-1}z(k)$ is decreasing and $\Delta_{\ell}^{m-2}z(k)$ is increasing, we have
\begin{equation}\label{eq3.20}
\Delta_{\ell}w(k)\leq\frac{\Delta_{\ell}\left(a(k)\Delta_{\ell}^{m-1}z(k)\right)}{\Delta_{\ell}^{m-2}z(k)}-\frac{w^2(k+\ell)}{a(k+\ell)}.
\end{equation}
Using the decreasing nature of $a(k)\Delta_{\ell}^{m-1}z(k)$ we have
\begin{equation*}
a(l)\Delta_{\ell}^{m-1}z(l)\leq a(k)\Delta_{\ell}^{m-1}z(k) ~~\text{for all}~~ l\geq k\geq k_2.
\end{equation*}
Divide the above inequality by $a(l)$ and then sum it from $k$ to $l-\ell$, we obtain
\begin{equation*}
\Delta_{\ell}^{m-2}z(l)-\Delta_{\ell}^{m-2}z(k)\leq a(k)\Delta_{\ell}^{m-1}z(k)\sum\limits_{r=0}^{\frac{l-k-j-\ell}{\ell}}\frac{1}{a(k+j+r\ell)}.
\end{equation*}
Letting $l\to\infty$, we obtain
\begin{align}\label{eq3.21}
0&\leq \Delta_{\ell}^{m-2}z(k)+a(k)\Delta_{\ell}^{m-1}z(k)\delta(k)\nonumber\\
\text{or}\hspace{1cm} -1&\leq \frac{a(k)\Delta_{\ell}^{m-1}z(k)\delta(k)}{\Delta_{\ell}^{m-2}z(k)}=w(k)\delta{k}\leq 0 ~~\text{for all}~~ k\geq k_2.
\end{align}
Define  $v(k)$ by
\begin{equation}\label{eq3.22}
v(k)=\frac{a(\tau(k))\Delta_{\ell}^{m-1}z(\tau(k))\delta(k)}{\Delta_{\ell}^{m-2}z(k)}~~ \text{for all}~~ k\geq k_2.
\end{equation}
We have $v(k)\leq 0$ and
\begin{equation}\label{eq3.23}
-1\leq v(k)\delta(k)\leq 0~~ \text{for all}~~ k\geq k_2.
\end{equation}
From (\ref{eq3.22}), we get
\begin{align}\label{eq3.24}
\Delta_{\ell}v(k)&=\frac{\Delta_{\ell}\left(a(\tau(k))\Delta_{\ell}^{m-1}z(\tau(k))\right)}{\Delta_{\ell}^{m-2}z(k)}-\frac{a(\tau(k+\ell))\Delta_{\ell}^{m-1}z(\tau(k+\ell))}{\Delta_{\ell}^{m-2}z(k+\ell)\Delta_{\ell}^{m-2}z(k)}\Delta_{\ell}^{m-1}z(k)\nonumber\\
&\leq \frac{\Delta_{\ell}\left(a(\tau(k))\Delta_{\ell}^{m-1}z(\tau(k))\right)}{\Delta_{\ell}^{m-2}z(k)}-\frac{v^2(k+\ell)}{a(\tau(k+\ell))}.
\end{align}
Combining (\ref{eq3.20}) and (\ref{eq3.24}), we obtain
\begin{align*}
\Delta_{\ell}w(k)+p\Delta_{\ell}v(k)&\leq\frac{\Delta_{\ell}\left(a(k)\Delta_{\ell}^{m-1}z(k)\right)}{\Delta_{\ell}^{m-2}z(k)}-\frac{w^2(k+\ell)}{a(k+\ell)}\\
&\hspace{1cm}+p\frac{\Delta_{\ell}\left(a(\tau(k))\Delta_{\ell}^{m-1}z(\tau(k))\right)}{\Delta_{\ell}^{m-2}z(k)}-p\frac{v^2(k+\ell)}{a(\tau(k+\ell))}.
\end{align*}
Using  (\ref{eq3.3}) in the last inequality, we have
\begin{equation}\label{eq3.25}
\Delta_{\ell}w(k)+p\Delta_{\ell}v(k)\leq\frac{-LP(k)z(k-\rho\ell)}{\Delta_{\ell}^{m-2}z(k)}-\frac{w^2(k+\ell)}{a(k+\ell)}-p\frac{v^2(k+\ell)}{a(\tau(k+\ell))}.
\end{equation}
Now from Lemma \ref{lem2} we obtain
\begin{equation}\label{eq3.26}
z(k-\rho\ell)\geq \frac{\lambda}{(m-2)!\ell^{m-2}}(k-\rho\ell)^{(m-2)}\Delta_{\ell}^{m-2}z(k-\rho\ell).
\end{equation}
Since $\Delta_{\ell}^{m-1}z(k)<0$ and $k-\rho\ell\leq k$, we have
\begin{equation}\label{eq3.27}
\Delta_{\ell}^{m-2}z(k)<\Delta_{\ell}^{m-2}z(k-\rho\ell) .
\end{equation}
Combining the inequalities (\ref{eq3.25}), (\ref{eq3.26}) and (\ref{eq3.27}), we have
\begin{equation}\label{eq3.28}
\Delta_{\ell}w(k)+p\Delta_{\ell}v(k)\leq\frac{-\lambda Q(k)(k-\rho\ell)^{m-2}}{(m-2)!\ell^{m-2}}-\frac{w^2(k+\ell)}{a(k+\ell)}-p\frac{v^2(k+\ell)}{a(k+\ell)}.
\end{equation}
Multiplying (\ref{eq3.28}) by $\delta(k)$ and summation is taken on the resulting inequality from $k_2$ to $k-\ell$, we obtain
\begin{flalign}\label{eq3.29}
&\delta(k)w(k)-\delta(k_2)w(k_2) + \sum_{r=0}^{\frac{k-k_2-j-\ell}{\ell}}\frac{w(k_2+j+r\ell+\ell)}{a(k_2+j+r\ell)}+p\delta(k)v(k)-p\delta(k_2)v(k_2)\nonumber\\
&+p\sum_{r=0}^{\frac{k-k_2-j-\ell}{\ell}}\frac{v(k_2+j+r\ell+\ell)}{a(k_2+j+r\ell)} +\sum_{r=0}^{\frac{k-k_2-j-\ell}{\ell}}\frac{w^2(k_2+j+r\ell+\ell)}{a(k_2+j+r\ell+\ell)}\delta(k_2+j+r\ell)\nonumber\\
&+\frac{\lambda}{(m-2)!\ell^{m-2}} \sum_{r=0}^{\frac{k-k_2-j-\ell}{\ell}}Q(k_2+j+r\ell)(k_2+j+r\ell-\rho\ell)^{m-2}\delta(k_2+j+r\ell) \nonumber\\
&+p\sum_{r=0}^{\frac{k-k_2-j-\ell}{\ell}}\frac{v^2(k_2+j+r\ell+\ell)}{a(k_2+j+r\ell+\ell)}\delta(k_2+j+r\ell)\leq 0.
\end{flalign}
By nature $\{a(k)\}$ increasing, and $\{\delta(k)\}$ decreasing thus the completion of square yields
\begin{flalign*} 
&\delta(k)w(k)-\delta(k_2)w(k_2) + p\delta(k)v(k)-p\delta(k_2)v(k_2)\nonumber\\
&\hspace{1cm}+\frac{\lambda}{(m-2)!\ell^{m-2}} \sum_{r=0}^{\frac{k-k_2-j-\ell}{\ell}}Q(k_2+j+r\ell)(k_2+j+r\ell-\rho\ell)^{m-2}\delta(k_2+j+r\ell)\nonumber\\
&\hspace{1cm}-\frac{1}{4}\sum_{r=0}^{\frac{k-k_2-j-\ell}{\ell}}\frac{1}{a(k_2+j+r\ell+\ell)\delta(k_2+j+r\ell+\ell)}\nonumber\\
&\hspace{1cm}-\frac{p}{4}\sum_{r=0}^{\frac{k-k_2-j-\ell}{\ell}}\frac{1}{a(k_2+j+r\ell+\ell)\delta(k_2+j+r\ell+\ell)}  \leq 0.\nonumber\\
&\text{or} \\
&\delta(k)w(k) + p\delta(k)v(k)\nonumber\\
&\hspace{0.5cm}+\sum_{r=0}^{\frac{k-k_2-j-\ell}{\ell}}\left[\frac{\lambda Q(k_2+j+r\ell)(k_2+j+r\ell-\rho\ell)^{m-2}\delta(k_2+j+r\ell)}{(m-2)!\ell^{m-2}}\right.\nonumber\\
&\hspace{2cm}\left. -\frac{1+p}{4a(k_2+j+r\ell+\ell)\delta(k_2+j+r\ell+\ell)}\right] \leq  \delta(k_2)w(k_2)+p\delta(k_2)v(k_2).&{}
\end{flalign*}
When we take limit supremum as $k\to\infty$ in the last inequality, we deduce a contradiction to (\ref{eq3.18}). This completes the proof.
\end{proof}
\begin{exm}
Consider the equation
\begin{align}\label{eeq1.4}
&\Delta_{\ell}\left(k(k+\ell)\Delta_{\ell}^{m-1}(x(k)+4(x(k-2\ell)))\right)\nonumber\\
&\hspace{3cm}+5(2^{m})(k+\ell)^2 x(k-6\ell)=0,~~k\geq6\ell
\end{align}
where $m\geq5$ is an odd integer. Here $p(k)=4>0$, $a(k)=k(k+\ell)$, $\rho=6$ and $q(k)=5(2^{m})(k+\ell)^2$, $\tau(k)=k-2\ell$. Then all conditions in Theorem \ref{thm4} are satisfied and every solution oscillatory, indeed $\{x(k)\}=\{(-1)^{\left\lceil\frac{k}{\ell}\right\rceil}\}$ is oscillatory.
\end{exm}

\begin{thm}\label{thm5}
Let $\delta(k)<\infty$
and $ \tau(k)\geq k$. If either (\ref{eq3.6}) holds or $\tau^{-1}(k-\rho\ell)$ is non-decreasing with (\ref{eq3.7}) holds and for sufficiently large $k_1\geq k_0$
\begin{align}\label{eq3.30}
&\limsup_{k\to\infty}\sum_{r=0}^{\frac{k-k_0-j-\ell}{\ell}}\left[\frac{\lambda\delta(k_0+j+r\ell)Q(k_0+j+r\ell)(k_0+j+r\ell-\rho\ell)^{m-2}\delta(\tau(k_0+j+r\ell))}{(m-2)!\ell^{m-2}}\right.\nonumber\\
&\hspace{4cm}\left.-\frac{ (1+p)}{4a(k_0+j+r\ell+1)\delta(\tau(k_0+j+r\ell+1))}\right]=\infty,
\end{align}
where $0<\lambda<1$ is a constant, and every solution $\{x(k)\}$ of equation (\ref{2e}) is oscillatory.
\end{thm}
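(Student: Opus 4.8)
The plan is to argue by contradiction in close parallel with the proofs of Theorems \ref{thm3} and \ref{thm4}, the only structural change being that the advanced condition $\tau(k)\ge k$ now replaces the delay condition $k-\rho\ell\le\tau(k)\le k$. Assume $\{x(k)\}$ is a non-oscillatory solution of (\ref{2e}) and, without loss of generality, eventually positive, so that there is $k_1\ge k_0$ with $x(k)>0$, $x(\tau(k))>0$ and $x(k-\rho\ell)>0$ for all $k\ge k_1$; then $z(k)>0$ and, from (\ref{2e}), $\Delta_\ell\bigl(a(k)\Delta_\ell^{m-1}z(k)\bigr)\le 0$ on $\mathbb{N}_\ell(k_1)$. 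Since $a(k)>0$, the sequence $\Delta_\ell^{m-1}z(k)$ is eventually of one sign, which gives two cases.

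Case (i): $\Delta_\ell^{m-1}z(k)>0$ eventually. Because $\tau(k)\ge k$, this is exactly the situation of Case (i) of Theorem \ref{thm2}. Lemma \ref{th1.8.11} gives $z(k)>0$, $\Delta_\ell z(k)>0$; Lemma \ref{lem2} bounds $z(k-\rho\ell)$ below by a constant multiple of $(k-\rho\ell)^{(m-1)}\Delta_\ell^{m-1}z(k-\rho\ell)$; and with $u(k)=a(k)\Delta_\ell^{m-1}z(k)>0$ decreasing and $w(k)=u(k)+pu(\tau(k))\le(1+p)u(k)$, inequality (\ref{eq3.3}) reduces to a first-order generalized difference inequality of the type (\ref{eq3.12}). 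Lemma \ref{lem4} then shows it has no eventually positive solution provided (\ref{eq3.6}) holds, or, after rewriting the delayed term via $\tau^{-1}(k-\rho\ell)$ (which is why that map is assumed non-decreasing), provided (\ref{eq3.7}) holds; either way we contradict the existence of $w$.

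Case (ii): $\Delta_\ell^{m-1}z(k)<0$ eventually. By Lemma \ref{th1.8.11}, $\Delta_\ell^{m-2}z(k)>0$ and $\Delta_\ell z(k)>0$, while $\Delta_\ell^{m-2}z$ is decreasing. Mimicking the Riccati construction of Theorem \ref{thm4}, but weighting the neutral term by $\delta(\tau(k))$, set
\[
w(k)=\frac{a(k)\Delta_\ell^{m-1}z(k)}{\Delta_\ell^{m-2}z(k)},\qquad
v(k)=\frac{a(\tau(k))\Delta_\ell^{m-1}z(\tau(k))\,\delta(\tau(k))}{\Delta_\ell^{m-2}z(k)},
\]
both eventually negative. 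As in (\ref{eq3.20}) and (\ref{eq3.24}), the decreasing nature of $a(k)\Delta_\ell^{m-1}z(k)$ drops the cross terms and produces a Riccati inequality $\Delta_\ell w(k)\le \Delta_\ell\bigl(a(k)\Delta_\ell^{m-1}z(k)\bigr)/\Delta_\ell^{m-2}z(k)-w^2(k+\ell)/a(k+\ell)$ and its $v$-analogue with $a(\tau(k+\ell))$ in the denominator; summing the defining relations against $1/a$ and letting the upper limit tend to infinity, exactly as in (\ref{eq3.21}) and using $\Delta_\ell^{m-2}z(\tau(k))\le\Delta_\ell^{m-2}z(k)$ (valid since $\tau(k)\ge k$ and $\Delta_\ell^{m-2}z$ decreases), gives the two-sided bounds $-1\le w(k)\delta(k)\le 0$ and $-1\le v(k)\le 0$. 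Adding the two Riccati inequalities, inserting (\ref{eq3.3}), and using Lemma \ref{lem2} in the form (\ref{eq3.26}) together with $\Delta_\ell^{m-2}z(k)\le\Delta_\ell^{m-2}z(k-\rho\ell)$, we reach an inequality of the shape (\ref{eq3.28}). Multiplying by $\delta(k)$, summing from $k_2$ to $k-\ell$ by parts via Lemma \ref{lem1.8.4}, and completing the square in the $w$- and $v$-terms (using $\{a(k)\}$ increasing and $\{\delta(k)\},\{\delta(\tau(k))\}$ decreasing) isolates precisely the partial sum appearing in (\ref{eq3.30}); the leftover boundary terms $\delta(k)w(k)+p\delta(k)v(k)$ remain bounded below by $-(1+p)$, so taking $\limsup_{k\to\infty}$ contradicts (\ref{eq3.30}).

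The main obstacle is the weight bookkeeping in Case (ii): with $\tau(k)\ge k$ the sequence $\Delta_\ell^{m-2}z$ is decreasing, so one must carry the two distinct decay rates $\delta(k)$ and $\delta(\tau(k))$ simultaneously, the factor $\delta(\tau(k))$ inside $v$ and inside the $Q$-weight of (\ref{eq3.30}) and the factor $\delta(k)$ as the outer multiplier, and arrange the completion of squares so that it yields exactly the subtracted weight $(1+p)\big/\bigl(4\,a(\cdot+\ell)\,\delta(\tau(\cdot+\ell))\bigr)$ of (\ref{eq3.30}) rather than a mismatched one. Once the substitutions are chosen so these agree, everything else is the same sequence of summation-by-parts identities already carried out for Theorem \ref{thm4}.
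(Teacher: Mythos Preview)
Your overall architecture (contradiction, sign dichotomy for $\Delta_\ell^{m-1}z$, Riccati substitution, summation by parts, completing the square) matches the paper, and Case~(i) is handled exactly as there. The problem is the bookkeeping you chose in Case~(ii).

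You build the second Riccati variable with the weight already inside,
\[
v(k)=\frac{a(\tau(k))\Delta_\ell^{m-1}z(\tau(k))\,\delta(\tau(k))}{\Delta_\ell^{m-2}z(k)},
\]
and then claim a ``$v$-analogue'' of (\ref{eq3.24}). But with $\delta(\tau(k))$ inside, the product rule (Lemma~\ref{lem1.8.4}) forces an extra term $\gamma(k)\,\Delta_\ell\delta(\tau(k))$ into $\Delta_\ell v(k)$, where $\gamma(k)=a(\tau(k))\Delta_\ell^{m-1}z(\tau(k))/\Delta_\ell^{m-2}z(k)$; this term does not drop out and prevents you from combining $\Delta_\ell w+p\,\Delta_\ell v$ with (\ref{eq3.3}) to reach anything like (\ref{eq3.28}). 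A second mismatch is your outer multiplier: you multiply by $\delta(k)$, but the subtracted weight in (\ref{eq3.30}) carries $\delta(\tau(\cdot+\ell))$, and the boundary control you need is $-1\le w(k)\delta(\tau(k))\le 0$, not $w(k)\delta(k)$.

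The paper avoids both issues by keeping \emph{both} Riccati variables weight-free,
\[
w(k)=\frac{a(k)\Delta_\ell^{m-1}z(k)}{\Delta_\ell^{m-2}z(k)},\qquad
\gamma(k)=\frac{a(\tau(k))\Delta_\ell^{m-1}z(\tau(k))}{\Delta_\ell^{m-2}z(k)},
\]
deriving the clean inequalities (\ref{eq3.20}), (\ref{eq3.24}) and their combination (\ref{eq3.28}) exactly as in Theorem~\ref{thm4}, and only \emph{then} multiplying (\ref{eq3.28}) by $\delta(\tau(k))$ (not $\delta(k)$) before summing. The bounds $-1\le w(k)\delta(\tau(k))\le 0$ and $-1\le \gamma(k)\delta(\tau(k))\le 0$ come from summing $a(\tau(l))\Delta_\ell^{m-1}z(\tau(l))\le a(\tau(k))\Delta_\ell^{m-1}z(\tau(k))$ against $1/a(\tau(l))$ and using $\Delta_\ell^{m-2}z(\tau(k))\le\Delta_\ell^{m-2}z(k)$ (here is where $\tau(k)\ge k$ enters). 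With that single change of multiplier the summation-by-parts and completion of the square run exactly as in (\ref{eq3.29}) and produce the summand of (\ref{eq3.30}).
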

\begin{proof}
Consider $\{x(k)\}$ be a non-oscillatory solution of equation (\ref{2e}). We shall prove the case when $\{x(k)\}$ is positive as for the case when $\{x(k)\}$ negative is similar. Since $\{x(k)\}$ is positive there $\exists$  an integer $k_1\geq k_0$  such that $x(k)>0$, $x(\tau(k))>0$ and $x(k-\rho\ell)>0$ for all $k\geq k_1$.  From equation (\ref{2e}), we see that $\{a(k)\Delta_{\ell}^{m-1}z(k)\}$ is decreasing for all $k\geq k_1$. Then there are two cases for $\Delta_{\ell}^{m-1}z(k)$, namely, either $\Delta_{\ell}^{m-1}z(k)>0$ eventually or $\Delta_{\ell}^{m-1}z(k)<0$ eventually.\\
\textbf{Case(i).} Suppose $\Delta_{\ell}^{m-1}z(k)>0$ for all $k\geq k_1$, the proof is similar to the Case (i) in Theorem \ref{thm2} and thus the details are omitted.\\
\textbf{Case(ii).} Suppose $\Delta_{\ell}^{m-1}z(k)<0$ for all $k\geq k_1$, Then Lemma \ref{th1.8.11}, we have $\Delta_{\ell}^{m-2}z(k)>0$ and $\Delta_{\ell}z(k)>0$. Now define $\gamma(k)$ by
\begin{equation}\label{eq3.31}
\gamma(k)=\frac{a(\tau(k))\Delta_{\ell}^{m-1}z(\tau(k))}{\Delta_{\ell}^{m-2}z(k)}~~\text{for all}~~k\geq k_2\geq k_1.
\end{equation}
Then $\gamma(k)<0$ for all $k\geq k_2$. Since $a(k)\Delta_{\ell}^{m-1}z(k)$ is decreasing we have 
\begin{equation*}
a(\tau(l))\Delta_{\ell}^{m-1}z(\tau(l))\leq a(\tau(k))\Delta_{\ell}^{m-1}z(\tau(k))~~\text{for all}~~ l\geq k\geq k_2.
\end{equation*}
Divide the last inequality by $a(\tau(l))$ and sum it from $k$ to $l-\ell$, we obtain
\begin{equation}\label{eq3.32}
\Delta_{\ell}^{m-2}z(\tau(l))-\Delta_{\ell}^{m-2}z(k)\leq a(\tau(k))\Delta_{\ell}^{m-1}z(\tau(k))\sum_{r=0}^{\frac{l-k-j-\ell}{\ell}}\frac{1}{a(\tau(k+j+r\ell))}.
\end{equation}
Letting $l\to\infty$, we obtain

\begin{equation}\label{eq3.33}
0\leq \Delta_{\ell}^{m-2}z(\tau(k))\leq a(\tau(k))\Delta_{\ell}^{m-2}z(\tau(k))\delta(\tau(k)).
\end{equation}
Since $\Delta_{\ell}^{m-1}z(k)<0$, $\Delta_{\ell}^{m-1}z(k)$ is decreasing and therefore for $\tau(k)\geq k$,  we have
\begin{equation}\label{eq3.34}
\Delta_{\ell}^{m-2}z(\tau(k))\leq \Delta_{\ell}^{m-2}z(k) .
\end{equation}
Combining the inequalities (\ref{eq3.33}), (\ref{eq3.34}) and (\ref{eq3.27}), we have
\begin{equation}\label{eq3.35}
-1\leq v(k)\delta(\tau(k))\leq 0 ~~\text{for all}~~k\geq k_2.
\end{equation}
Similarly defining $w(k)$ by
\begin{equation*}
w(k)=\frac{a(k)\Delta_{\ell}^{m-1}z(k)}{\Delta_{\ell}^{m-2}z(k)}~~\text{for all}~~k\geq k_2.
\end{equation*}
we get
\begin{equation}\label{eq3.36}
-1\leq w(k)\delta(\tau(k))\leq 0 ~~\text{for all}~~ k\geq k_2.
\end{equation}
Based on the proof of Theorem \ref{thm4} we obtain (\ref{eq3.28}). Multiply (\ref{eq3.28}) by $\delta(\tau(k))$ and then sum it form $k_2$ to $k-\ell$, we get
\begin{align}\label{eq3.37}
&\delta(\tau(k))w(k)-\delta(\tau(k_2))w(k_2) + \sum_{r=0}^{\frac{k-k_2-j-\ell}{\ell}}\frac{w(k_2+j+r\ell+\ell)}{a(k_2+j+r\ell)}+p\delta(\tau(k))v(k)\nonumber\\
&-p\delta(\tau(k_2))v(k_2)+p\sum_{r=0}^{\frac{k-k_2-j-\ell}{\ell}}\frac{v(k_2+j+r\ell+\ell)}{a(\tau(k_2+j+r\ell))} +\sum_{r=0}^{\frac{k-k_2-j-\ell}{\ell}}\frac{w^2(k_2+j+r\ell+\ell)}{a(\tau(k_2+j+r\ell))}\nonumber\\
&+\frac{\lambda}{(m-2)!\ell^{m-2}} \sum_{r=0}^{\frac{k-k_2-j-\ell}{\ell}}Q(k_2+j+r\ell)(k_2+j+r\ell-\rho\ell)^{m-2}\delta(\tau(k_2+j+r\ell))\nonumber\\
&\hspace{2cm}+p\sum_{r=0}^{\frac{k-k_2-j-\ell}{\ell}}\frac{v^2(k_2+j+r\ell+\ell)}{a(\tau(k_2+j+r\ell))} \leq 0.
\end{align}
Since $\{a(k)\}$ increasing, and $\{\delta(k)\}$ decreasing  and by using the completion of square, we arrive at
\begin{align*} 
&\delta(k)w(k)-\delta(k_2)w(k_2) + p\delta(k)v(k)-p\delta(k_2)v(k_2)\nonumber\\
&\hspace{1cm}+\frac{\lambda}{(m-2)!\ell^{m-2}} \sum_{r=0}^{\frac{k-k_2-j-\ell}{\ell}}Q(k_2+j+r\ell)(k_2+j+r\ell-\rho\ell)^{m-2}\delta(\tau(k_2+j+r\ell))\nonumber\\
&\hspace{1cm}-\frac{1}{4}\sum_{r=0}^{\frac{k-k_2-j-\ell}{\ell}}\frac{1}{a(k_2+j+r\ell+\ell)\delta(\tau(k_2+j+r\ell+\ell))}\nonumber\\
&\hspace{1cm}-\frac{p}{4}\sum_{r=0}^{\frac{k-k_2j-\ell}{\ell}}\frac{1}{a(k_2+j+r\ell+\ell)\delta(\tau(k_2+j+r\ell+\ell))}  \leq 0.\nonumber\\
&\text{or} \\
&\delta(\tau(k))w(k) + p\delta(\tau(k))v(k)\nonumber\\
&+\sum_{r=0}^{\frac{k-k_2-j-\ell}{\ell}}\left[\frac{\lambda Q(k_2+j+r\ell)(k_2+j+r\ell-\rho\ell)^{m-2}\delta(k_2+j+r\ell)}{(m-2)!\ell^{m-2}}\right.\nonumber\\
&\hspace{1cm}\left. -\frac{1+p}{4a(\tau(k_2+j+r\ell+\ell))\delta(\tau(k_2+j+r\ell+\ell))}\right] \leq  \delta(\tau(k_2))w(k_2)+p\delta(\tau(k_2))v(k_2).
\end{align*}
By taking limit as $k\to\infty$ in the last inequality and arrive at a result which is contrary to (\ref{eq3.18}). That completes the proof.
\end{proof}
\begin{exm}
Consider the equation
\begin{align}\label{eeq1.5}
&\Delta_{\ell}\left(k(k-\ell)\Delta_{\ell}^{m-1}(x(k)+2(x(k+2\ell)))\right)
+3(2^m)k^2 x(k-\ell)=0,~~k\geq 2\ell
\end{align}
where $m\geq4$ is an even integer. Here $p(k)=2>0$, $a(k)=k(k-\ell)$, $q(k)=3(2^m)k^2$, $\tau(k)=k-2\ell$ and $\rho=2$. Since all conditions in Theorem \ref{thm5} are satisfied, every solution of equation (\ref{eeq1.5}) is oscillatory. Indeed $\{x(k)\}=\{(-1)^{\left\lceil\frac{k}{\ell}\right\rceil}\}$ is the such oscillatory solution.
\end{exm}

\section{Conclusion} 

The difference equations are interesting part of mathematics however dealing with $n^{th}$ order difference equations is a tough task. In particular, discussing the oscillation of solutions of ordinary difference equations is also not easy to handle. Further, the delay and neutral difference equations it is still more complicated. Not many results involving the oscillation of solutions of higher order difference equations for both odd and even order difference equations involving $\Delta$ are available. Results for higher order difference equations for the generalized difference operator $\Delta_{\ell}$ are very rare and researchers studying the oscillatory and asymptotic behavior of solutions for higher order neutral difference equations involving $\Delta$ difference operator. These equations naturally arise in the applications such as in population dynamics. In the present work, we generalize the early results by using the generalized difference operator $\Delta_{\ell}$. Many authors studied in order to find sufficient conditions that will ensure all solutions oscillatory. In this work we have identified sufficient conditions for the solutions to be oscilatory.


\end{document}